\newtheorem{theorem}{Theorem}[section]
\newtheorem*{main}{Main Theorem}
\newtheorem{lemma}[theorem]{Lemma}
\theoremstyle{definition}
\newtheorem{definition}[theorem]{Definition}
\newtheorem{remark}{Remark}
\title{Positive metric entropy arises in some nondegenerate nearly integrable systems}
\author{Dong Chen}
\address{Dong Chen: Department of Mathematics, Pennsylvania State University, University Park, PA 16802, USA}
\email{dxc360@psu.edu}
\date{}                                         
\begin{document}
\maketitle

\begin{abstract}
The celebrated KAM Theory says that if one makes a small perturbation of a non-degenerate completely integrable system, we still see a huge measure of invariant tori with quasi-periodic dynamics in the perturbed system. These invariant tori are known as KAM tori. What happens outside KAM tori draws a lot of attention. In this paper we present a Lagrangian perturbation of the geodesic flow on a flat 3-torus. The perturbation is $C^\infty$ small but the flow has a positive measure of trajectories with positive Lyapunov exponent. The measure of this set is of course extremely small. Still, the flow has positive metric entropy. From this result we get positive metric entropy outside some KAM tori.
\end{abstract}

\section{Introduction}
\footnote{2010 \emph{Mathematics Subject Classification.} 37A35, 37J40, 53C60.

\emph{Key words and phrases.} KAM theory, Finsler metric, duel lens map, Hamiltonian flow, perturbation, metric entropy.

The author was partially supported by Dmitri Burago's Department research fund 42844-1001.}

Already in the early 50's the study of nearly integrable Hamiltonian systems has drawn the attention of many outstanding mathematicians such as Arnol'd, Kolmogorov and Moser. Indeed, for any integrable Hamiltonian system the whole phase space is foliated by invariant Lagrangian submanifolds that are diffeomorphic to tori, generally called \textit{KAM tori}, and on which the dynamics is conjugated to a rigid rotation. Therefore, it is natural to ask what happens to such a foliation and to these stable motions once the system is slightly perturbed. In 1954 Kolmogorov \cite{K} - and later Arnol'd \cite{A1} and Moser \cite{M} in different contexts - proved that, for small perturbations of an integrable system it is still possible to find a big measure set of KAM tori. This result, commonly referred to as \textit{KAM theorem}, contributed to raise new interesting questions, for instance about the destiny of the stable motions that are destroyed by effect of the perturbation (in other words, about the dynamics outside KAM tori). In this context, Arnol'd \cite{A2} constructed an example of a perturbed integrable system, in which some orbits outside KAM tori have a wide range in action variables (even though the rate of change of action variables is exponentially small \cite{N}). This striking phenomenon, known as \textit{Arnol'd diffusion} and still quite far from being fully understood, shows the presence of some randomness in the dynamics outside KAM tori. The question we address in the present paper is therefore the following: how much random can the motion outside KAM tori be?"

It is well-known that, $C^2$-generically the Hamiltonian flow has positive topological entropy (cf. \cite{NH1}, see also  \cite{C} for an analogous statement for Riemannian geodesic flows). Once we turn our attention to metric entropy, the problem becomes more challenging and one cannot simply derive positive metric entropy from positive topological entropy. In fact, Bolsinov and Taimanov \cite{BT} built an example of a Riemannian manifold on which the geodesic flow has positive topological entropy but zero metric entropy. 

Recently Burago and Ivanov \cite{BI} used dual lens map to construct a reversible Finsler metric $C^{\infty}$-close to the   standard metric on $S^n, n\geq 4$, such that its geodesic flow has positive metric entropy. However the geodesic flow on the sphere is degenerate, hence it does not lie in the realm of KAM theory. 

Unlike the case of spheres, the geodesic flow on flat tori are nondegenrate. In this paper we therefore provide an example analogous to Burago-Ivanov's one on $\mathbb{T}^3$. More precisely, we prove the following:

\begin{main}
For every $\epsilon>0$ there exists a reversible Finsler metric on $\mathbb{T}^3$ which is $\epsilon$-close to the Euclidean metric in the $C^{\infty}$-sense and such that the associated geodesic flow has positive metric entropy.
\end{main}

Our theorem shows that in the complement of KAM tori, the behavior of nearly integrable Hamiltonian flows can be quite stochastic. However our example does not possess Arnold diffusion. For details, see Remark \ref{rem4}.

In order to prove the main theorem we start with perturbing the return map associated with the standard geodesic flow on a specific section to get positive metric entropy. To prove positiveness of metric entropy we use Maupertuis principle and Donnay-Burns-Gerber cap \cite{BG}\cite{D} to perturb the kinetic Hamiltonian. This method was also used by Donnay and Liverani \cite{DL}. By pulling back via the standard projection, we can perturb the return map to get positive metric entropy. Now, using  Burago-Ivanov dual lens map theory \cite{BI}, from the perturbed return map we get a (reversible) Finsler metric on $\mathbb{T}^3$ satisfying all the requirements of the main theorem. We shall however notice that, by upper semicontinuity (see \cite{NH2}), the metric entropy we get is microscopic.

\section{Preliminaries}
Let $M$ be a smooth $n$-dimensional manifold, $T^*M$ its cotangent bundle, and $\omega$ the standard symplectic form on $T^*M$. To the pair $(H,\omega)$ we can associate a unique vector field $X_H$ by
$\omega(X_H, V)=dH(V)$ for any smooth vector field $V$ on $T^*M$, which is called the \textit{Hamiltonian vector field}. The flow $\Phi_H^t$ on $T^*M$ defined by $X_H$ is called the \textit{Hamiltonian flow of $H$}. 

A typical example of a Hamiltonian flow is the geodesic flow on a Finsler manifold. Let $\varphi$ be a Finsler metric on $M$, i.e.  a smooth family of quadratically convex norms $\varphi(x, \cdot)$ on each tangent space $T_x M$. It is \textit{reversible} if $\varphi(x,v)=\varphi(x,-v)$ for all $x\in M, v\in T_x M$. Denote with $UTM$ its unit tangent bundle; the Finsler metric $\varphi$ defines a dual norm on the cotangent bundle $T^*M$ by
$$\varphi^*(\chi):=\sup_{v\in UT_xM}\{\chi(v)\}, \text{ for }\chi\in T^*_x M.$$
The geodesic flow $g_t$ on $(M,\varphi)$ is defined to be the Hamiltonian flow on $T^*M$ with Hamiltonian $(\varphi^*)^2/2$. Recall that the geodesic flow can also be viewed as the Euler-Lagrange flow on $TM$ associated with the 2-homogeneous Lagrangian $\varphi^2/2$.

One can easily see that $\Phi_H^t$ is a symplectomorphism (i.e. preserves $\omega$) and hence volume-preserving. Once we fix a level set $H^{-1}(c)$, we can define a conditional measure on this level set from the volume form. Such conditional measure is invariant under $\Phi_H^t$ and it is called the \textit{Liouville measure}.

For any point $x$ in $(M,\varphi)$, the unit ball $B_x$ in $T_x M$ is a convex body. By F. John \cite{J}, among all ellipsoids contained in $B_x$, there exists a unique ellipsoid $E_x$ with maximum volume. $E_x$ is the unit sphere of some  quadratic form on $T_x M$. In this way we can define quadratic forms on each tangent spaces and these forms are close to Finsler norms. In this way we can associate with the Finsler metric $\varphi$ a Riemannian metric $g_{\varphi}$, from which $UTM$ inherits a Riemannian structure (see \cite{S} for details). This metric is called the Sasaki metric. For each vector $\zeta\in T_{v}UTM$ we define the Lyapunov exponent by
$$\chi^{+}(v,\zeta):=\limsup_{t\rightarrow\infty} \frac{\ln||Dg_t \zeta||}{t} $$
and the upper Lyapunov exponent by
$$\chi^{+}(v):=\max_{\zeta\in T_{v}UTM}\chi^{+}(v,\zeta).$$
For our purpose, there is no need to recall the precise definition of the metric entropy $h_{\mu}$ for the Liouville measure $\mu$ on $UTM$. Indeed, it is enough to know that  Pesin's inequality \cite{P}
$$h_{\mu}\geq\int_{UTM} \chi^{+}(v)d\mu(v) \eqno(1)$$
provides a lower bound of metric entropy. Indeed, this formula tells us that the metric entropy is no less than the mean of upper Lyapunov exponent.

\begin{center}
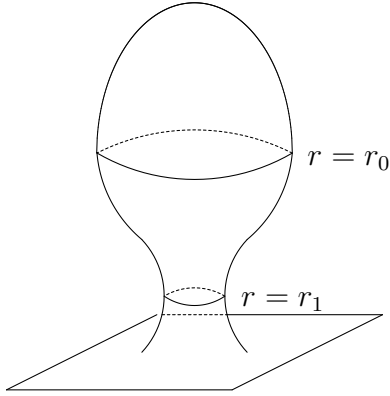
\begin{figure}[h]
\begin{tikzpicture}[line cap=round,line join=round,>=triangle 45,x=1.0cm,y=1.0cm]
\clip(-0.3,-0.1) rectangle (6.394087306319387,5.253268867919481);
\draw (0.,0.)-- (2.,1.);
\draw (5.,1.)-- (3.,0.);
\draw (3.,0.)-- (0.,0.);
\draw [shift={(1.,1.25)}] plot[domain=-0.7531512809621947:0.7531512809621944,variable=\t]({1.*1.0965856099730655*cos(\t r)+0.*1.0965856099730655*sin(\t r)},{0.*1.0965856099730655*cos(\t r)+1.*1.0965856099730655*sin(\t r)});
\draw [shift={(4.,1.25)}] plot[domain=2.388441372627599:3.8947439345519874,variable=\t]({1.*1.0965856099730653*cos(\t r)+0.*1.0965856099730653*sin(\t r)},{0.*1.0965856099730653*cos(\t r)+1.*1.0965856099730653*sin(\t r)});
\draw [shift={(2.886947292709711,3.2938768202463033)}] plot[domain=3.2259803818347432:4.0136880286584775,variable=\t]({1.*1.6898436741603213*cos(\t r)+0.*1.6898436741603213*sin(\t r)},{0.*1.6898436741603213*cos(\t r)+1.*1.6898436741603213*sin(\t r)});
\draw [shift={(2.1104217339243587,3.2999973562133778)}] plot[domain=5.409956594422004:6.194639459546497,variable=\t]({1.*1.6962234298777308*cos(\t r)+0.*1.6962234298777308*sin(\t r)},{0.*1.6962234298777308*cos(\t r)+1.*1.6962234298777308*sin(\t r)});
\draw (1.2000048072783327,3.155439043719245) -- (1.2000048072783327,3.155439043719245);
\draw (1.2000048072783327,3.155439043719245) -- (1.2065047858501403,3.349823182024358);
\draw (1.2065047858501403,3.349823182024358) -- (1.213004764421948,3.4321861553491866);
\draw (1.213004764421948,3.4321861553491866) -- (1.2195047429937556,3.495150329256437);
\draw (1.2195047429937556,3.495150329256437) -- (1.2260047215655632,3.5480307458219205);
\draw (1.2260047215655632,3.5480307458219205) -- (1.2325047001373708,3.594441448080551);
\draw (1.2325047001373708,3.594441448080551) -- (1.2390046787091784,3.6362385507577293);
\draw (1.2390046787091784,3.6362385507577293) -- (1.245504657280986,3.6745261261678692);
\draw (1.245504657280986,3.6745261261678692) -- (1.2520046358527936,3.7100244522706323);
\draw (1.2520046358527936,3.7100244522706323) -- (1.2585046144246013,3.7432344549737717);
\draw (1.2585046144246013,3.7432344549737717) -- (1.2650045929964089,3.7745212977262588);
\draw (1.2650045929964089,3.7745212977262588) -- (1.2715045715682165,3.8041609781835115);
\draw (1.2715045715682165,3.8041609781835115) -- (1.2780045501400241,3.8323681709758906);
\draw (1.2780045501400241,3.8323681709758906) -- (1.2845045287118317,3.8593137933683597);
\draw (1.2845045287118317,3.8593137933683597) -- (1.2910045072836394,3.885136582876201);
\draw (1.2910045072836394,3.885136582876201) -- (1.297504485855447,3.9099510081721314);
\draw (1.297504485855447,3.9099510081721314) -- (1.3040044644272546,3.933852840448662);
\draw (1.3040044644272546,3.933852840448662) -- (1.3105044429990622,3.9569231794016635);
\draw (1.3105044429990622,3.9569231794016635) -- (1.3170044215708698,3.979231427739324);
\draw (1.3170044215708698,3.979231427739324) -- (1.3235044001426775,4.000837531708705);
\draw (1.3235044001426775,4.000837531708705) -- (1.330004378714485,4.021793697669571);
\draw (1.330004378714485,4.021793697669571) -- (1.3365043572862927,4.042145727192177);
\draw (1.3365043572862927,4.042145727192177) -- (1.3430043358581003,4.061934069501075);
\draw (1.3430043358581003,4.061934069501075) -- (1.349504314429908,4.081194661177232);
\draw (1.349504314429908,4.081194661177232) -- (1.3560042930017155,4.099959603463128);
\draw (1.3560042930017155,4.099959603463128) -- (1.3625042715735232,4.118257714008684);
\draw (1.3625042715735232,4.118257714008684) -- (1.3690042501453308,4.1361149804061785);
\draw (1.3690042501453308,4.1361149804061785) -- (1.3755042287171384,4.153554936086854);
\draw (1.3755042287171384,4.153554936086854) -- (1.382004207288946,4.170598974242948);
\draw (1.382004207288946,4.170598974242948) -- (1.3885041858607536,4.187266611834141);
\draw (1.3885041858607536,4.187266611834141) -- (1.3950041644325613,4.203575713057518);
\draw (1.3950041644325613,4.203575713057518) -- (1.4015041430043689,4.219542679644946);
\draw (1.4015041430043689,4.219542679644946) -- (1.4080041215761765,4.235182613820449);
\draw (1.4080041215761765,4.235182613820449) -- (1.4145041001479841,4.25050945857495);
\draw (1.4145041001479841,4.25050945857495) -- (1.4210040787197917,4.265536119005714);
\draw (1.4210040787197917,4.265536119005714) -- (1.4275040572915993,4.280274567757051);
\draw (1.4275040572915993,4.280274567757051) -- (1.434004035863407,4.2947359370392375);
\draw (1.434004035863407,4.2947359370392375) -- (1.4405040144352146,4.308930599258847);
\draw (1.4405040144352146,4.308930599258847) -- (1.4470039930070222,4.322868237939154);
\draw (1.4470039930070222,4.322868237939154) -- (1.4535039715788298,4.336557910324303);
\draw (1.4535039715788298,4.336557910324303) -- (1.4600039501506374,4.350008102830357);
\draw (1.4600039501506374,4.350008102830357) -- (1.466503928722445,4.363226780318711);
\draw (1.466503928722445,4.363226780318711) -- (1.4730039072942527,4.3762214300138265);
\draw (1.4730039072942527,4.3762214300138265) -- (1.4795038858660603,4.388999100760957);
\draw (1.4795038858660603,4.388999100760957) -- (1.486003864437868,4.401566438215095);
\draw (1.486003864437868,4.401566438215095) -- (1.4925038430096755,4.413929716465688);
\draw (1.4925038430096755,4.413929716465688) -- (1.4990038215814832,4.426094866529251);
\draw (1.4990038215814832,4.426094866529251) -- (1.5055038001532908,4.438067502081406);
\draw (1.5055038001532908,4.438067502081406) -- (1.5120037787250984,4.449852942748839);
\draw (1.5120037787250984,4.449852942748839) -- (1.518503757296906,4.461456235238545);
\draw (1.518503757296906,4.461456235238545) -- (1.5250037358687136,4.472882172545217);
\draw (1.5250037358687136,4.472882172545217) -- (1.5315037144405212,4.48413531144652);
\draw (1.5315037144405212,4.48413531144652) -- (1.5380036930123289,4.495219988469436);
\draw (1.5380036930123289,4.495219988469436) -- (1.5445036715841365,4.506140334488128);
\draw (1.5445036715841365,4.506140334488128) -- (1.551003650155944,4.516900288094231);
\draw (1.551003650155944,4.516900288094231) -- (1.5575036287277517,4.52750360786361);
\draw (1.5575036287277517,4.52750360786361) -- (1.5640036072995593,4.537953883629097);
\draw (1.5640036072995593,4.537953883629097) -- (1.570503585871367,4.548254546856036);
\draw (1.570503585871367,4.548254546856036) -- (1.5770035644431746,4.558408880206569);
\draw (1.5770035644431746,4.558408880206569) -- (1.5835035430149822,4.568420026368959);
\draw (1.5835035430149822,4.568420026368959) -- (1.5900035215867898,4.578290996219948);
\draw (1.5900035215867898,4.578290996219948) -- (1.5965035001585974,4.588024676380792);
\draw (1.5965035001585974,4.588024676380792) -- (1.603003478730405,4.597623836221222);
\draw (1.603003478730405,4.597623836221222) -- (1.6095034573022127,4.607091134359916);
\draw (1.6095034573022127,4.607091134359916) -- (1.6160034358740203,4.616429124705093);
\draw (1.6160034358740203,4.616429124705093) -- (1.622503414445828,4.625640262074424);
\draw (1.622503414445828,4.625640262074424) -- (1.6290033930176355,4.6347269074296);
\draw (1.6290033930176355,4.6347269074296) -- (1.6355033715894431,4.643691332757377);
\draw (1.6355033715894431,4.643691332757377) -- (1.6420033501612508,4.652535725625919);
\draw (1.6420033501612508,4.652535725625919) -- (1.6485033287330584,4.661262193442475);
\draw (1.6485033287330584,4.661262193442475) -- (1.655003307304866,4.6698727674360025);
\draw (1.655003307304866,4.6698727674360025) -- (1.6615032858766736,4.678369406386179);
\draw (1.6615032858766736,4.678369406386179) -- (1.6680032644484812,4.68675400011829);
\draw (1.6680032644484812,4.68675400011829) -- (1.6745032430202889,4.695028372781724);
\draw (1.6745032430202889,4.695028372781724) -- (1.6810032215920965,4.703194285928253);
\draw (1.6810032215920965,4.703194285928253) -- (1.687503200163904,4.711253441404844);
\draw (1.687503200163904,4.711253441404844) -- (1.6940031787357117,4.719207484074498);
\draw (1.6940031787357117,4.719207484074498) -- (1.7005031573075193,4.72705800437746);
\draw (1.7005031573075193,4.72705800437746) -- (1.707003135879327,4.734806540744098);
\draw (1.707003135879327,4.734806540744098) -- (1.7135031144511346,4.742454581869822);
\draw (1.7135031144511346,4.742454581869822) -- (1.7200030930229422,4.750003568861569);
\draw (1.7200030930229422,4.750003568861569) -- (1.7265030715947498,4.757454897264612);
\draw (1.7265030715947498,4.757454897264612) -- (1.7330030501665574,4.764809918977731);
\draw (1.7330030501665574,4.764809918977731) -- (1.739503028738365,4.772069944064189);
\draw (1.739503028738365,4.772069944064189) -- (1.7460030073101727,4.779236242465345);
\draw (1.7460030073101727,4.779236242465345) -- (1.7525029858819803,4.786310045623214);
\draw (1.7525029858819803,4.786310045623214) -- (1.759002964453788,4.793292548017835);
\draw (1.759002964453788,4.793292548017835) -- (1.7655029430255955,4.800184908624821);
\draw (1.7655029430255955,4.800184908624821) -- (1.7720029215974031,4.806988252298104);
\draw (1.7720029215974031,4.806988252298104) -- (1.7785029001692108,4.813703671082503);
\draw (1.7785029001692108,4.813703671082503) -- (1.7850028787410184,4.82033222546041);
\draw (1.7850028787410184,4.82033222546041) -- (1.791502857312826,4.82687494553658);
\draw (1.791502857312826,4.82687494553658) -- (1.7980028358846336,4.833332832164735);
\draw (1.7980028358846336,4.833332832164735) -- (1.8045028144564412,4.839706858019414);
\draw (1.8045028144564412,4.839706858019414) -- (1.8110027930282488,4.8459979686162935);
\draw (1.8110027930282488,4.8459979686162935) -- (1.8175027716000565,4.852207083283934);
\draw (1.8175027716000565,4.852207083283934) -- (1.824002750171864,4.8583350960897755);
\draw (1.824002750171864,4.8583350960897755) -- (1.8305027287436717,4.864382876722928);
\draw (1.8305027287436717,4.864382876722928) -- (1.8370027073154793,4.870351271336222);
\draw (1.8370027073154793,4.870351271336222) -- (1.843502685887287,4.876241103349757);
\draw (1.843502685887287,4.876241103349757) -- (1.8500026644590946,4.882053174218065);
\draw (1.8500026644590946,4.882053174218065) -- (1.8565026430309022,4.887788264162873);
\draw (1.8565026430309022,4.887788264162873) -- (1.8630026216027098,4.893447132873323);
\draw (1.8630026216027098,4.893447132873323) -- (1.8695026001745174,4.899030520175352);
\draw (1.8695026001745174,4.899030520175352) -- (1.876002578746325,4.904539146671905);
\draw (1.876002578746325,4.904539146671905) -- (1.8825025573181327,4.909973714355456);
\draw (1.8825025573181327,4.909973714355456) -- (1.8890025358899403,4.915334907194315);
\draw (1.8890025358899403,4.915334907194315) -- (1.895502514461748,4.92062339169402);
\draw (1.895502514461748,4.92062339169402) -- (1.9020024930335555,4.9258398174351115);
\draw (1.9020024930335555,4.9258398174351115) -- (1.9085024716053631,4.930984817588448);
\draw (1.9085024716053631,4.930984817588448) -- (1.9150024501771707,4.936059009409208);
\draw (1.9150024501771707,4.936059009409208) -- (1.9215024287489784,4.9410629947106);
\draw (1.9215024287489784,4.9410629947106) -- (1.928002407320786,4.945997360318287);
\draw (1.928002407320786,4.945997360318287) -- (1.9345023858925936,4.9508626785064624);
\draw (1.9345023858925936,4.9508626785064624) -- (1.9410023644644012,4.955659507416428);
\draw (1.9410023644644012,4.955659507416428) -- (1.9475023430362088,4.960388391458546);
\draw (1.9475023430362088,4.960388391458546) -- (1.9540023216080165,4.965049861698306);
\draw (1.9540023216080165,4.965049861698306) -- (1.960502300179824,4.969644436227274);
\draw (1.960502300179824,4.969644436227274) -- (1.9670022787516317,4.974172620519606);
\draw (1.9670022787516317,4.974172620519606) -- (1.9735022573234393,4.978634907774789);
\draw (1.9735022573234393,4.978634907774789) -- (1.980002235895247,4.983031779247243);
\draw (1.980002235895247,4.983031779247243) -- (1.9865022144670546,4.987363704563354);
\draw (1.9865022144670546,4.987363704563354) -- (1.9930021930388622,4.991631142026515);
\draw (1.9930021930388622,4.991631142026515) -- (1.9995021716106698,4.995834538910694);
\draw (1.9995021716106698,4.995834538910694) -- (2.006002150182477,4.999974331743033);
\draw (2.006002150182477,4.999974331743033) -- (2.0125021287542846,5.004050946575946);
\draw (2.0125021287542846,5.004050946575946) -- (2.019002107326092,5.008064799249181);
\draw (2.019002107326092,5.008064799249181) -- (2.0255020858978994,5.012016295642251);
\draw (2.0255020858978994,5.012016295642251) -- (2.0320020644697068,5.015905831917661);
\draw (2.0320020644697068,5.015905831917661) -- (2.038502043041514,5.019733794755297);
\draw (2.038502043041514,5.019733794755297) -- (2.0450020216133216,5.023500561578353);
\draw (2.0450020216133216,5.023500561578353) -- (2.051502000185129,5.027206500771138);
\draw (2.051502000185129,5.027206500771138) -- (2.0580019787569364,5.030851971889094);
\draw (2.0580019787569364,5.030851971889094) -- (2.0645019573287438,5.034437325861335);
\draw (2.0645019573287438,5.034437325861335) -- (2.071001935900551,5.037962905186011);
\draw (2.071001935900551,5.037962905186011) -- (2.0775019144723585,5.041429044118769);
\draw (2.0775019144723585,5.041429044118769) -- (2.084001893044166,5.044836068854595);
\draw (2.084001893044166,5.044836068854595) -- (2.0905018716159733,5.048184297703278);
\draw (2.0905018716159733,5.048184297703278) -- (2.0970018501877807,5.0514740412587535);
\draw (2.0970018501877807,5.0514740412587535) -- (2.103501828759588,5.054705602562548);
\draw (2.103501828759588,5.054705602562548) -- (2.1100018073313955,5.057879277261556);
\draw (2.1100018073313955,5.057879277261556) -- (2.116501785903203,5.060995353760351);
\draw (2.116501785903203,5.060995353760351) -- (2.1230017644750103,5.06405411336824);
\draw (2.1230017644750103,5.06405411336824) -- (2.1295017430468177,5.067055830441244);
\draw (2.1295017430468177,5.067055830441244) -- (2.136001721618625,5.070000772519196);
\draw (2.136001721618625,5.070000772519196) -- (2.1425017001904325,5.07288920045813);
\draw (2.1425017001904325,5.07288920045813) -- (2.14900167876224,5.07572136855812);
\draw (2.14900167876224,5.07572136855812) -- (2.1555016573340473,5.078497524686735);
\draw (2.1555016573340473,5.078497524686735) -- (2.1620016359058547,5.081217910398262);
\draw (2.1620016359058547,5.081217910398262) -- (2.168501614477662,5.083882761048832);
\draw (2.168501614477662,5.083882761048832) -- (2.1750015930494695,5.0864923059075995);
\draw (2.1750015930494695,5.0864923059075995) -- (2.181501571621277,5.089046768264103);
\draw (2.181501571621277,5.089046768264103) -- (2.1880015501930843,5.091546365531931);
\draw (2.1880015501930843,5.091546365531931) -- (2.1945015287648917,5.093991309348809);
\draw (2.1945015287648917,5.093991309348809) -- (2.201001507336699,5.096381805673233);
\draw (2.201001507336699,5.096381805673233) -- (2.2075014859085065,5.098718054877757);
\draw (2.2075014859085065,5.098718054877757) -- (2.214001464480314,5.101000251839031);
\draw (2.214001464480314,5.101000251839031) -- (2.2205014430521213,5.103228586024695);
\draw (2.2205014430521213,5.103228586024695) -- (2.2270014216239287,5.105403241577234);
\draw (2.2270014216239287,5.105403241577234) -- (2.233501400195736,5.107524397394866);
\draw (2.233501400195736,5.107524397394866) -- (2.2400013787675435,5.109592227209571);
\draw (2.2400013787675435,5.109592227209571) -- (2.246501357339351,5.1116068996623305);
\draw (2.246501357339351,5.1116068996623305) -- (2.2530013359111583,5.113568578375659);
\draw (2.2530013359111583,5.113568578375659) -- (2.2595013144829657,5.115477422023517);
\draw (2.2595013144829657,5.115477422023517) -- (2.266001293054773,5.117333584398657);
\draw (2.266001293054773,5.117333584398657) -- (2.2725012716265804,5.119137214477494);
\draw (2.2725012716265804,5.119137214477494) -- (2.279001250198388,5.120888456482546);
\draw (2.279001250198388,5.120888456482546) -- (2.2855012287701952,5.1225874499425315);
\draw (2.2855012287701952,5.1225874499425315) -- (2.2920012073420026,5.124234329750159);
\draw (2.2920012073420026,5.124234329750159) -- (2.29850118591381,5.12582922621769);
\draw (2.29850118591381,5.12582922621769) -- (2.3050011644856174,5.127372265130314);
\draw (2.3050011644856174,5.127372265130314) -- (2.311501143057425,5.128863567797396);
\draw (2.311501143057425,5.128863567797396) -- (2.318001121629232,5.130303251101645);
\draw (2.318001121629232,5.130303251101645) -- (2.3245011002010396,5.131691427546253);
\draw (2.3245011002010396,5.131691427546253) -- (2.331001078772847,5.133028205300047);
\draw (2.331001078772847,5.133028205300047) -- (2.3375010573446544,5.134313688240696);
\draw (2.3375010573446544,5.134313688240696) -- (2.344001035916462,5.135547975996028);
\draw (2.344001035916462,5.135547975996028) -- (2.350501014488269,5.136731163983469);
\draw (2.350501014488269,5.136731163983469) -- (2.3570009930600766,5.13786334344767);
\draw (2.3570009930600766,5.13786334344767) -- (2.363500971631884,5.138944601496342);
\draw (2.363500971631884,5.138944601496342) -- (2.3700009502036914,5.139975021134335);
\draw (2.3700009502036914,5.139975021134335) -- (2.376500928775499,5.1409546812959865);
\draw (2.376500928775499,5.1409546812959865) -- (2.383000907347306,5.141883656875789);
\draw (2.383000907347306,5.141883656875789) -- (2.3895008859191136,5.142762018757379);
\draw (2.3895008859191136,5.142762018757379) -- (2.396000864490921,5.143589833840894);
\draw (2.396000864490921,5.143589833840894) -- (2.4025008430627284,5.144367165068707);
\draw (2.4025008430627284,5.144367165068707) -- (2.4090008216345358,5.145094071449584);
\draw (2.4090008216345358,5.145094071449584) -- (2.415500800206343,5.145770608081254);
\draw (2.415500800206343,5.145770608081254) -- (2.4220007787781506,5.146396826171445);
\draw (2.4220007787781506,5.146396826171445) -- (2.428500757349958,5.146972773057378);
\draw (2.428500757349958,5.146972773057378) -- (2.4350007359217654,5.147498492223757);
\draw (2.4350007359217654,5.147498492223757) -- (2.4415007144935728,5.14797402331926);
\draw (2.4415007144935728,5.14797402331926) -- (2.44800069306538,5.148399402171541);
\draw (2.44800069306538,5.148399402171541) -- (2.4545006716371875,5.148774660800774);
\draw (2.4545006716371875,5.148774660800774) -- (2.461000650208995,5.149099827431733);
\draw (2.461000650208995,5.149099827431733) -- (2.4675006287808023,5.149374926504424);
\draw (2.4675006287808023,5.149374926504424) -- (2.4740006073526097,5.149599978683291);
\draw (2.4740006073526097,5.149599978683291) -- (2.480500585924417,5.1497750008649765);
\draw (2.480500585924417,5.1497750008649765) -- (2.4870005644962245,5.149900006184677);
\draw (2.4870005644962245,5.149900006184677) -- (2.493500543068032,5.149975004021072);
\draw (2.493500543068032,5.149975004021072) -- (2.5000005216398393,5.149999999999839);
\draw (2.5000005216398393,5.149999999999839) -- (2.5065005002116467,5.14997499599577);
\draw (2.5065005002116467,5.14997499599577) -- (2.513000478783454,5.149899990133472);
\draw (2.513000478783454,5.149899990133472) -- (2.5195004573552615,5.149774976786662);
\draw (2.5195004573552615,5.149774976786662) -- (2.526000435927069,5.149599946576061);
\draw (2.526000435927069,5.149599946576061) -- (2.5325004144988763,5.1493748863658695);
\draw (2.5325004144988763,5.1493748863658695) -- (2.5390003930706837,5.149099779258838);
\draw (2.5390003930706837,5.149099779258838) -- (2.545500371642491,5.148774604589922);
\draw (2.545500371642491,5.148774604589922) -- (2.5520003502142985,5.148399337918503);
\draw (2.5520003502142985,5.148399337918503) -- (2.558500328786106,5.147973951019203);
\draw (2.558500328786106,5.147973951019203) -- (2.5650003073579133,5.147498411871236);
\draw (2.5650003073579133,5.147498411871236) -- (2.5715002859297207,5.146972684646335);
\draw (2.5715002859297207,5.146972684646335) -- (2.578000264501528,5.14639672969521);
\draw (2.578000264501528,5.14639672969521) -- (2.5845002430733355,5.145770503532539);
\draw (2.5845002430733355,5.145770503532539) -- (2.591000221645143,5.14509395882048);
\draw (2.591000221645143,5.14509395882048) -- (2.5975002002169503,5.144367044350684);
\draw (2.5975002002169503,5.144367044350684) -- (2.6040001787887577,5.143589705024794);
\draw (2.6040001787887577,5.143589705024794) -- (2.610500157360565,5.142761881833417);
\draw (2.610500157360565,5.142761881833417) -- (2.6170001359323725,5.141883511833544);
\draw (2.6170001359323725,5.141883511833544) -- (2.62350011450418,5.140954528124403);
\draw (2.62350011450418,5.140954528124403) -- (2.6300000930759873,5.139974859821713);
\draw (2.6300000930759873,5.139974859821713) -- (2.6365000716477947,5.138944432030339);
\draw (2.6365000716477947,5.138944432030339) -- (2.643000050219602,5.13786316581529);
\draw (2.643000050219602,5.13786316581529) -- (2.6495000287914094,5.136730978171061);
\draw (2.6495000287914094,5.136730978171061) -- (2.656000007363217,5.135547781989279);
\draw (2.656000007363217,5.135547781989279) -- (2.6624999859350242,5.134313486024627);
\draw (2.6624999859350242,5.134313486024627) -- (2.6689999645068316,5.133027994859006);
\draw (2.6689999645068316,5.133027994859006) -- (2.675499943078639,5.13169120886391);
\draw (2.675499943078639,5.13169120886391) -- (2.6819999216504464,5.130303024160984);
\draw (2.6819999216504464,5.130303024160984) -- (2.688499900222254,5.128863332580708);
\draw (2.688499900222254,5.128863332580708) -- (2.694999878794061,5.127372021619192);
\draw (2.694999878794061,5.127372021619192) -- (2.7014998573658686,5.125828974393019);
\draw (2.7014998573658686,5.125828974393019) -- (2.707999835937676,5.124234069592109);
\draw (2.707999835937676,5.124234069592109) -- (2.7144998145094834,5.1225871814305535);
\draw (2.7144998145094834,5.1225871814305535) -- (2.720999793081291,5.120888179595357);
\draw (2.720999793081291,5.120888179595357) -- (2.727499771653098,5.119136929193073);
\draw (2.727499771653098,5.119136929193073) -- (2.7339997502249056,5.1173332906942335);
\draw (2.7339997502249056,5.1173332906942335) -- (2.740499728796713,5.115477119875562);
\draw (2.740499728796713,5.115477119875562) -- (2.7469997073685204,5.113568267759877);
\draw (2.7469997073685204,5.113568267759877) -- (2.753499685940328,5.111606580553648);
\draw (2.753499685940328,5.111606580553648) -- (2.759999664512135,5.1095918995821235);
\draw (2.759999664512135,5.1095918995821235) -- (2.7664996430839426,5.107524061221989);
\draw (2.7664996430839426,5.107524061221989) -- (2.77299962165575,5.105402896831455);
\draw (2.77299962165575,5.105402896831455) -- (2.7794996002275574,5.103228232677716);
\draw (2.7794996002275574,5.103228232677716) -- (2.7859995787993648,5.100999889861719);
\draw (2.7859995787993648,5.100999889861719) -- (2.792499557371172,5.098717684240132);
\draw (2.792499557371172,5.098717684240132) -- (2.7989995359429796,5.0963814263444505);
\draw (2.7989995359429796,5.0963814263444505) -- (2.805499514514787,5.093990921297152);
\draw (2.805499514514787,5.093990921297152) -- (2.8119994930865944,5.091545968724794);
\draw (2.8119994930865944,5.091545968724794) -- (2.8184994716584018,5.0890463626679745);
\draw (2.8184994716584018,5.0890463626679745) -- (2.824999450230209,5.086491891488049);
\draw (2.824999450230209,5.086491891488049) -- (2.8314994288020165,5.083882337770496);
\draw (2.8314994288020165,5.083882337770496) -- (2.837999407373824,5.081217478224825);
\draw (2.837999407373824,5.081217478224825) -- (2.8444993859456313,5.078497083580911);
\draw (2.8444993859456313,5.078497083580911) -- (2.8509993645174387,5.075720918481637);
\draw (2.8509993645174387,5.075720918481637) -- (2.857499343089246,5.072888741371715);
\draw (2.857499343089246,5.072888741371715) -- (2.8639993216610535,5.070000304382553);
\draw (2.8639993216610535,5.070000304382553) -- (2.870499300232861,5.067055353213033);
\draw (2.870499300232861,5.067055353213033) -- (2.8769992788046683,5.06405362700606);
\draw (2.8769992788046683,5.06405362700606) -- (2.8834992573764757,5.060994858220717);
\draw (2.8834992573764757,5.060994858220717) -- (2.889999235948283,5.057878772499876);
\draw (2.889999235948283,5.057878772499876) -- (2.8964992145200905,5.054705088533106);
\draw (2.8964992145200905,5.054705088533106) -- (2.902999193091898,5.051473517914682);
\draw (2.902999193091898,5.051473517914682) -- (2.9094991716637053,5.048183764996535);
\draw (2.9094991716637053,5.048183764996535) -- (2.9159991502355127,5.044835526735939);
\draw (2.9159991502355127,5.044835526735939) -- (2.92249912880732,5.041428492537733);
\draw (2.92249912880732,5.041428492537733) -- (2.9289991073791275,5.037962344090876);
\draw (2.9289991073791275,5.037962344090876) -- (2.935499085950935,5.034436755199104);
\draw (2.935499085950935,5.034436755199104) -- (2.9419990645227423,5.0308513916054585);
\draw (2.9419990645227423,5.0308513916054585) -- (2.9484990430945497,5.027205910810455);
\draw (2.9484990430945497,5.027205910810455) -- (2.954999021666357,5.02349996188361);
\draw (2.954999021666357,5.02349996188361) -- (2.9614990002381645,5.01973318526808);
\draw (2.9614990002381645,5.01973318526808) -- (2.967998978809972,5.015905212578122);
\draw (2.967998978809972,5.015905212578122) -- (2.9744989573817793,5.012015666389075);
\draw (2.9744989573817793,5.012015666389075) -- (2.9809989359535867,5.008064160019549);
\draw (2.9809989359535867,5.008064160019549) -- (2.987498914525394,5.004050297305499);
\draw (2.987498914525394,5.004050297305499) -- (2.9939988930972015,4.999973672365831);
\draw (2.9939988930972015,4.999973672365831) -- (3.000498871669009,4.995833869359181);
\draw (3.000498871669009,4.995833869359181) -- (3.0069988502408163,4.991630462231473);
\draw (3.0069988502408163,4.991630462231473) -- (3.0134988288126237,4.987363014453864);
\draw (3.0134988288126237,4.987363014453864) -- (3.019998807384431,4.9830310787506376);
\draw (3.019998807384431,4.9830310787506376) -- (3.0264987859562384,4.978634196816607);
\draw (3.0264987859562384,4.978634196816607) -- (3.032998764528046,4.974171899023544);
\draw (3.032998764528046,4.974171899023544) -- (3.0394987430998532,4.969643704115139);
\draw (3.0394987430998532,4.969643704115139) -- (3.0459987216716606,4.9650491188899615);
\draw (3.0459987216716606,4.9650491188899615) -- (3.052498700243468,4.960387637871856);
\draw (3.052498700243468,4.960387637871856) -- (3.0589986788152754,4.955658742967206);
\draw (3.0589986788152754,4.955658742967206) -- (3.065498657387083,4.950861903108407);
\draw (3.065498657387083,4.950861903108407) -- (3.0719986359588902,4.945996573882928);
\draw (3.0719986359588902,4.945996573882928) -- (3.0784986145306976,4.941062197147227);
\draw (3.0784986145306976,4.941062197147227) -- (3.084998593102505,4.936058200624814);
\draw (3.084998593102505,4.936058200624814) -- (3.0914985716743124,4.930983997487652);
\draw (3.0914985716743124,4.930983997487652) -- (3.09799855024612,4.925838985920092);
\draw (3.09799855024612,4.925838985920092) -- (3.104498528817927,4.920622548664442);
\draw (3.104498528817927,4.920622548664442) -- (3.1109985073897346,4.915334052547244);
\draw (3.1109985073897346,4.915334052547244) -- (3.117498485961542,4.90997284798529);
\draw (3.117498485961542,4.90997284798529) -- (3.1239984645333494,4.904538268470278);
\draw (3.1239984645333494,4.904538268470278) -- (3.130498443105157,4.899029630031054);
\draw (3.130498443105157,4.899029630031054) -- (3.136998421676964,4.8934462306722);
\draw (3.136998421676964,4.8934462306722) -- (3.1434984002487716,4.887787349787741);
\draw (3.1434984002487716,4.887787349787741) -- (3.149998378820579,4.882052247548598);
\draw (3.149998378820579,4.882052247548598) -- (3.1564983573923864,4.876240164262395);
\draw (3.1564983573923864,4.876240164262395) -- (3.1629983359641938,4.870350319704048);
\draw (3.1629983359641938,4.870350319704048) -- (3.169498314536001,4.8643819124155625);
\draw (3.169498314536001,4.8643819124155625) -- (3.1759982931078086,4.858334118973252);
\draw (3.1759982931078086,4.858334118973252) -- (3.182498271679616,4.852206093220573);
\draw (3.182498271679616,4.852206093220573) -- (3.1889982502514234,4.845996965464571);
\draw (3.1889982502514234,4.845996965464571) -- (3.1954982288232308,4.839705841633826);
\draw (3.1954982288232308,4.839705841633826) -- (3.201998207395038,4.833331802395646);
\draw (3.201998207395038,4.833331802395646) -- (3.2084981859668456,4.826873902230078);
\draw (3.2084981859668456,4.826873902230078) -- (3.214998164538653,4.820331168458141);
\draw (3.214998164538653,4.820331168458141) -- (3.2214981431104603,4.813702600221507);
\draw (3.2214981431104603,4.813702600221507) -- (3.2279981216822677,4.806987167410637);
\draw (3.2279981216822677,4.806987167410637) -- (3.234498100254075,4.800183809538169);
\draw (3.234498100254075,4.800183809538169) -- (3.2409980788258825,4.79329143455412);
\draw (3.2409980788258825,4.79329143455412) -- (3.24749805739769,4.786308917599188);
\draw (3.24749805739769,4.786308917599188) -- (3.2539980359694973,4.779235099692174);
\draw (3.2539980359694973,4.779235099692174) -- (3.2604980145413047,4.772068786347229);
\draw (3.2604980145413047,4.772068786347229) -- (3.266997993113112,4.7648087461162785);
\draw (3.266997993113112,4.7648087461162785) -- (3.2734979716849195,4.757453709051661);
\draw (3.2734979716849195,4.757453709051661) -- (3.279997950256727,4.750002365083536);
\draw (3.279997950256727,4.750002365083536) -- (3.2864979288285343,4.742453362306267);
\draw (3.2864979288285343,4.742453362306267) -- (3.2929979074003417,4.734805305167425);
\draw (3.2929979074003417,4.734805305167425) -- (3.299497885972149,4.727056752552599);
\draw (3.299497885972149,4.727056752552599) -- (3.3059978645439565,4.719206215758573);
\draw (3.3059978645439565,4.719206215758573) -- (3.312497843115764,4.711252156346818);
\draw (3.312497843115764,4.711252156346818) -- (3.3189978216875713,4.703192983868551);
\draw (3.3189978216875713,4.703192983868551) -- (3.3254978002593787,4.695027053451833);
\draw (3.3254978002593787,4.695027053451833) -- (3.331997778831186,4.686752663240336);
\draw (3.331997778831186,4.686752663240336) -- (3.3384977574029935,4.678368051672481);
\draw (3.3384977574029935,4.678368051672481) -- (3.344997735974801,4.66987139458859);
\draw (3.344997735974801,4.66987139458859) -- (3.3514977145466083,4.661260802152576);
\draw (3.3514977145466083,4.661260802152576) -- (3.3579976931184157,4.65253431557342);
\draw (3.3579976931184157,4.65253431557342) -- (3.364497671690223,4.643689903610244);
\draw (3.364497671690223,4.643689903610244) -- (3.3709976502620305,4.63472545884326);
\draw (3.3709976502620305,4.63472545884326) -- (3.377497628833838,4.625638793691103);
\draw (3.377497628833838,4.625638793691103) -- (3.3839976074056453,4.61642763615311);
\draw (3.3839976074056453,4.61642763615311) -- (3.3904975859774527,4.607089625252927);
\draw (3.3904975859774527,4.607089625252927) -- (3.39699756454926,4.597622306157406);
\draw (3.39699756454926,4.597622306157406) -- (3.4034975431210674,4.588023124941986);
\draw (3.4034975431210674,4.588023124941986) -- (3.409997521692875,4.578289422970709);
\draw (3.409997521692875,4.578289422970709) -- (3.4164975002646822,4.568418430855558);
\draw (3.4164975002646822,4.568418430855558) -- (3.4229974788364896,4.558407261955908);
\draw (3.4229974788364896,4.558407261955908) -- (3.429497457408297,4.548252905374486);
\draw (3.429497457408297,4.548252905374486) -- (3.4359974359801044,4.537952218401232);
\draw (3.4359974359801044,4.537952218401232) -- (3.442497414551912,4.527501918350847);
\draw (3.442497414551912,4.527501918350847) -- (3.4489973931237192,4.516898573733356);
\draw (3.4489973931237192,4.516898573733356) -- (3.4554973716955266,4.506138594689702);
\draw (3.4554973716955266,4.506138594689702) -- (3.461997350267334,4.495218222616052);
\draw (3.461997350267334,4.495218222616052) -- (3.4684973288391414,4.484133518890924);
\draw (3.4684973288391414,4.484133518890924) -- (3.474997307410949,4.472880352608248);
\draw (3.474997307410949,4.472880352608248) -- (3.481497285982756,4.461454387206898);
\draw (3.481497285982756,4.461454387206898) -- (3.4879972645545636,4.4498510658726165);
\draw (3.4879972645545636,4.4498510658726165) -- (3.494497243126371,4.438065595571432);
\draw (3.494497243126371,4.438065595571432) -- (3.5009972216981784,4.426092929554135);
\draw (3.5009972216981784,4.426092929554135) -- (3.507497200269986,4.413927748148595);
\draw (3.507497200269986,4.413927748148595) -- (3.513997178841793,4.401564437630182);
\draw (3.513997178841793,4.401564437630182) -- (3.5204971574136006,4.388997066929454);
\draw (3.5204971574136006,4.388997066929454) -- (3.526997135985408,4.376219361899688);
\draw (3.526997135985408,4.376219361899688) -- (3.5334971145572154,4.363224676823805);
\draw (3.5334971145572154,4.363224676823805) -- (3.5399970931290228,4.350005962789113);
\draw (3.5399970931290228,4.350005962789113) -- (3.54649707170083,4.336555732497755);
\draw (3.54649707170083,4.336555732497755) -- (3.5529970502726376,4.322866021008281);
\draw (3.5529970502726376,4.322866021008281) -- (3.559497028844445,4.308928341817114);
\draw (3.559497028844445,4.308928341817114) -- (3.5659970074162524,4.294733637584216);
\draw (3.5659970074162524,4.294733637584216) -- (3.5724969859880598,4.280272224680971);
\draw (3.5724969859880598,4.280272224680971) -- (3.578996964559867,4.265533730584785);
\draw (3.578996964559867,4.265533730584785) -- (3.5854969431316746,4.250507022957237);
\draw (3.5854969431316746,4.250507022957237) -- (3.591996921703482,4.235180129012068);
\draw (3.591996921703482,4.235180129012068) -- (3.5984969002752893,4.2195401434942745);
\draw (3.5984969002752893,4.2195401434942745) -- (3.6049968788470967,4.20357312323707);
\draw (3.6049968788470967,4.20357312323707) -- (3.611496857418904,4.18726396581966);
\draw (3.611496857418904,4.18726396581966) -- (3.6179968359907115,4.1705962692891845);
\draw (3.6179968359907115,4.1705962692891845) -- (3.624496814562519,4.153552169199354);
\draw (3.624496814562519,4.153552169199354) -- (3.6309967931343263,4.13611214830825);
\draw (3.6309967931343263,4.13611214830825) -- (3.6374967717061337,4.118254813102497);
\draw (3.6374967717061337,4.118254813102497) -- (3.643996750277941,4.099956629783616);
\draw (3.643996750277941,4.099956629783616) -- (3.6504967288497485,4.081191610337098);
\draw (3.6504967288497485,4.081191610337098) -- (3.656996707421556,4.061930936624701);
\draw (3.656996707421556,4.061930936624701) -- (3.6634966859933633,4.042142506835576);
\draw (3.6634966859933633,4.042142506835576) -- (3.6699966645651707,4.021790383722592);
\draw (3.6699966645651707,4.021790383722592) -- (3.676496643136978,4.0008341172744295);
\draw (3.676496643136978,4.0008341172744295) -- (3.6829966217087855,3.9792279049837798);
\draw (3.6829966217087855,3.9792279049837798) -- (3.689496600280593,3.9569195393644114);
\draw (3.689496600280593,3.9569195393644114) -- (3.6959965788524003,3.933849072801018);
\draw (3.6959965788524003,3.933849072801018) -- (3.7024965574242077,3.9099471009041435);
\draw (3.7024965574242077,3.9099471009041435) -- (3.708996535996015,3.885132521884909);
\draw (3.708996535996015,3.885132521884909) -- (3.7154965145678225,3.859309561906348);
\draw (3.7154965145678225,3.859309561906348) -- (3.72199649313963,3.8323637488974196);
\draw (3.72199649313963,3.8323637488974196) -- (3.7284964717114373,3.804156340889211);
\draw (3.7284964717114373,3.804156340889211) -- (3.7349964502832447,3.7745164146445136);
\draw (3.7349964502832447,3.7745164146445136) -- (3.741496428855052,3.743229287302954);
\draw (3.741496428855052,3.743229287302954) -- (3.7479964074268595,3.71001894948846);
\draw (3.7479964074268595,3.71001894948846) -- (3.754496385998667,3.674520220370009);
\draw (3.754496385998667,3.674520220370009) -- (3.7609963645704743,3.6362321469077323);
\draw (3.7609963645704743,3.6362321469077323) -- (3.7674963431422817,3.5944344058628435);
\draw (3.7674963431422817,3.5944344058628435) -- (3.773996321714089,3.548022842135311);
\draw (3.773996321714089,3.548022842135311) -- (3.7804963002858964,3.49514116811377);
\draw (3.7804963002858964,3.49514116811377) -- (3.786996278857704,3.4321748931266938);
\draw (3.786996278857704,3.4321748931266938) -- (3.7934962574295112,3.349807197102207);
\draw (3.7934962574295112,3.349807197102207) -- (3.7999962360013186,3.1548127991102097);
\draw [shift={(2.5027950853277097,5.374696262536255)}] plot[domain=4.183381775843862:5.240284126484102,variable=\t]({1.*2.575269704553821*cos(\t r)+0.*2.575269704553821*sin(\t r)},{0.*2.575269704553821*cos(\t r)+1.*2.575269704553821*sin(\t r)});
\draw [shift={(2.5001237865471433,0.5704550360140033)},dash pattern=on 1pt off 1pt]  plot[domain=1.1040201370666363:2.0364604580817427,variable=\t]({1.*2.88855160790422*cos(\t r)+0.*2.88855160790422*sin(\t r)},{0.*2.88855160790422*cos(\t r)+1.*2.88855160790422*sin(\t r)});
\draw [shift={(2.5000334865774017,1.8375589421755396)}] plot[domain=4.1132251718625135:5.311439659032727,variable=\t]({1.*0.7154022367760153*cos(\t r)+0.*0.7154022367760153*sin(\t r)},{0.*0.7154022367760153*cos(\t r)+1.*0.7154022367760153*sin(\t r)});
\draw [shift={(2.4999615685688013,0.566135119397119)},dash pattern=on 1pt off 1pt]  plot[domain=1.035685835982406:2.1057936877332493,variable=\t]({1.*0.7911926808946546*cos(\t r)+0.*0.7911926808946546*sin(\t r)},{0.*0.7911926808946546*cos(\t r)+1.*0.7911926808946546*sin(\t r)});
\draw [dash pattern=on 1pt off 1pt] (2.067707825203131,1.)-- (2.932292174796869,1.);
\draw (2.932292174796869,1.)-- (5.,1.);
\draw (3.8587247545003462,3.318281998069933) node[anchor=north west] {$r=r_0$};
\draw (2.977312919196823,1.4432951282203845) node[anchor=north west] {$r=r_1$};
\end{tikzpicture}
\caption{A non-ergodic DBG torus}
\end{figure}
\end{center}

\section{Non-ergodic Donnay-Burns-Gerber tori}
\begin{definition}
We say that a centrally symmetric cap $\mathscr{C}=\{r\leq r_1\}\subseteq \mathbb{R}^2$ is a \textbf{non-ergodic Donnay-Burns-Gerber (DBG) cap} if:

(a) $\mathscr{C}$ has two parallel geodesics $C_{r_0}$ and $C_{r_1}$, where $C_{r_i}:=\{r=r_i\}$ for $i=0,1$. 

(b) The Gaussian curvature is positive on $\{r\leq r_0\}$, negative at $C_{r_1}$, and strictly decreasing from center to boundary. 

If a torus contains a  non-ergodic DBG cap and outside the cap the Gaussian curvature is nonpositive, then we call it a \textbf{non-ergodic DBG torus}.

\end{definition}

\begin{lemma}\label{lem1}
The geodesic flow on a non-ergodic DBG torus has positive metric entropy.
\end{lemma}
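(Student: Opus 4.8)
The plan is to exhibit a $g_t$-invariant set $\Lambda\subset UTM$ of positive Liouville measure on which the upper Lyapunov exponent $\chi^+$ is positive; since $\chi^+\ge 0$ everywhere, Pesin's inequality (1) then gives $h_\mu\ge\int_\Lambda\chi^+\,d\mu>0$. The set $\Lambda$ will be the set of unit vectors $v$ whose geodesic crosses the boundary geodesic $C_{r_1}$ of the cap transversally and pointing into $\mathscr C$ at some time. Let $\Sigma$ be the cross-section of such inward, non-tangent vectors based on $C_{r_1}$. Using the Clairaut integral on the cap $\mathscr C$, every $v\in\Sigma$ has Clairaut constant strictly below the value of the width function at $C_{r_1}$, and the level-set structure of the width function then forces the geodesic to descend past $C_{r_0}$ into the region of positive curvature, turn around, and exit through $C_{r_1}$ again before re-entering the nonpositively curved part $\{r\ge r_1\}$; so every geodesic issued from $\Sigma$ performs a complete traversal of $\mathscr C$. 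Since the Liouville measure is finite, Poincar\'e recurrence applied to the first-return map $P$ of $\Sigma$ shows that $P$ is defined $\mu_\Sigma$-a.e.\ and that $\mu_\Sigma$-a.e.\ $v$ returns to $\Sigma$ infinitely often; hence $\mu$-a.e.\ orbit in $\Lambda$ traverses $\mathscr C$ infinitely often, $\Lambda$ is visibly $g_t$-invariant, and $\mu(\Lambda)>0$.

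The core of the argument is the Donnay--Burns--Gerber cone estimate along the orbits of $\Lambda$. Express $Dg_t$ in Jacobi--Riccati coordinates for perpendicular Jacobi fields $J$ along the geodesic, and let $\mathcal C$ be the ``unstable'' cone $\{u\ge 0\}$, $u=\dot J/J$. On a segment lying in $\{r\ge r_1\}$ one has $K\le 0$, and the Riccati equation $\dot u=-u^2-K$ keeps $\mathcal C$ forward-invariant, as in nonpositive curvature. The substantial input --- this is exactly what \cite{D} and \cite{BG} provide --- is that the cap $\mathscr C$, precisely because $C_{r_1}$ is a closed geodesic, the curvature inside is positive and strictly decreasing from the center, and it is negative along $C_{r_1}$, is \emph{defocusing}: the Jacobi equation along any geodesic segment entering and leaving $\mathscr C$ through $C_{r_1}$ maps $\mathcal C$ strictly into its interior and expands the length coordinate $J$ by a definite factor. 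Passing to the return map $P$ makes this clean, since one no longer has to follow $u$ through a conjugate point in the interior of a traversal: $DP$ maps $\mathcal C$ strictly inside $\mathcal C$ at every iterate and expands a transverse direction at each step.

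Given such a strictly invariant cone field for $DP$, a standard monotone-cone (Wojtkowski-type) argument --- or Oseledets' theorem together with the cone --- shows that $P$ has all Lyapunov exponents nonzero $\mu_\Sigma$-a.e.; since $P$ is the Poincar\'e map of a Hamiltonian flow its exponents come in pairs of opposite sign, so the top exponent is strictly positive a.e. The return time to $C_{r_1}$ is $\mu_\Sigma$-integrable by Kac's formula, hence a.e.\ finite, so this positivity passes to the flow: $\chi^+(v)>0$ for $\mu$-a.e.\ $v\in\Lambda$. With (1), this yields $h_\mu>0$. (The set $\Lambda$ is in general a proper invariant subset of $UTM$: its complement contains an elliptic island around the stable closed geodesic $C_{r_0}$, which is why the flow fails to be ergodic even though its metric entropy is positive.)

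The step I expect to be the genuine obstacle is the cap estimate of the second paragraph. In the model Donnay--Burns--Gerber situation one glues a focusing cap bounded by a geodesic onto a nonpositively curved piece, whereas here $\mathscr C$ itself carries an annulus $\{r_0<r<r^*\}$ of positive curvature (with $K=0$ at $r^*$) lying \emph{outside} the geodesic-bounded subcap $\{r\le r_0\}$; so one cannot merely combine the elementary Riccati monotonicity with an off-the-shelf cap lemma. One must check that conditions (a)--(b) defining a non-ergodic DBG cap are exactly what is needed for the Donnay--Burns--Gerber defocusing estimate to hold for a full $C_{r_1}$-to-$C_{r_1}$ traversal, so that the cone is genuinely strictly contracted on every return. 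A secondary point, already handled by the Poincar\'e recurrence above, is to ensure that the geodesics entering $\mathscr C$ only finitely often do not exhaust $\Sigma$, so that $\Lambda$ really has positive measure.
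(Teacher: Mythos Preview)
Your approach is essentially the paper's: Clairaut for exit from the cap, the cone $\{u=J'/J\ge 0\}$, invariance through the cap via the Burns--Gerber Jacobi/Riccati analysis, invariance outside via $K\le 0$, Poincar\'e recurrence, Wojtkowski's cone theory, and Pesin's inequality. The paper's sketch makes the cap step slightly more explicit by introducing the reference solutions $u_S,u_C$ (with $J_S(0)=0$, $J_C(0)=1$) and recording two facts imported from \cite{BG}: $u_S(\pm T_1)=u_S(\pm T_2)=0$ with $J_S$ vanishing only at $0$, and $u_C$ blowing up at some $\tau\in(0,T_2)$; Riccati comparison with $u_S$ then gives $u(T_1)\ge 0$ whenever $u(-T_1)\ge 0$.

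One small correction: the cap by itself does \emph{not} send the cone strictly into its interior. The boundary case $u(-T_1)=0$ coincides with $u_S$ and returns with $u(T_1)=u_S(T_1)=0$; the paper only claims (non-strict) preservation by the cap. The strict containment is obtained for the full return map, after the geodesic has passed through the negatively curved annulus $\{r_1<r<r_2\}$ on its way out and back, which pushes $u$ off $0$. Your statement about $DP$ is therefore correct, but the sentence asserting that a single cap traversal ``maps $\mathcal C$ strictly into its interior and expands $J$ by a definite factor'' overshoots; neither strictness nor a uniform expansion factor is available from the cap alone, and neither is needed for Wojtkowski's criterion.
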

\begin{proof}[Sketch of proof]
The proof is similar to the proof of Theorem 1.1 in \cite{BG}. By virtue of Clairaut's integral, any geodesic entering the cap $\mathscr{C}$ will go out of the cap.
 Let $c:[-T_1, T_1]\rightarrow \mathscr{C}$ be an arc-length parametrized geodesic with endpoints in $C_{r_1}$ such that $c(0)$ is the point of $c$ closest to the origin; suppose furthermore that $c(\pm T_2)$ lie in $C_{r_0}$, for some $0<T_2<T_1$. Let $J_S, J_C$ be two Jacobi fields on $c$ with $J_S(0)=0, J_S'(0)=1, J_C(0)=1, J_C'(0)=0$. Let $u_S=J_S'/J_S, u_C=J_C'/J_C$ and $K(t)$ be the Gaussian curvature at $c(t)$. Then both $u_S$ and $u_C$ satisfy the Riccati equation:
$$u'(t)+u(t)^2+K(t)=0$$

By imitating the proofs of Lemma 2.5 and Lemma 2.6 in \cite{BG}, we get

(A) $u_S(\pm T_1)=u_S(\pm T_2)=0.$ and $J_S(t)$ vanishes only at $t=0$. 

(B) There is a $\tau\in(0, T_2)$ such that $\lim_{t\rightarrow\tau^-}u_C(t)=-\infty.$

If a Jacobi field $J$ on $c$ satisfies $J'(-T_1)J(-T_1)\geq 0$ then $u:=J'/J$ satisfies the Riccati equation with $u(-T_1)\geq 0$. This means the graph of $u$ must lie above that of $u_S$. By (A) and (B) we have $u(T_1)\geq 0$. So the cone $J'J\geq0$ is preserved by the cap. 

\begin{center}
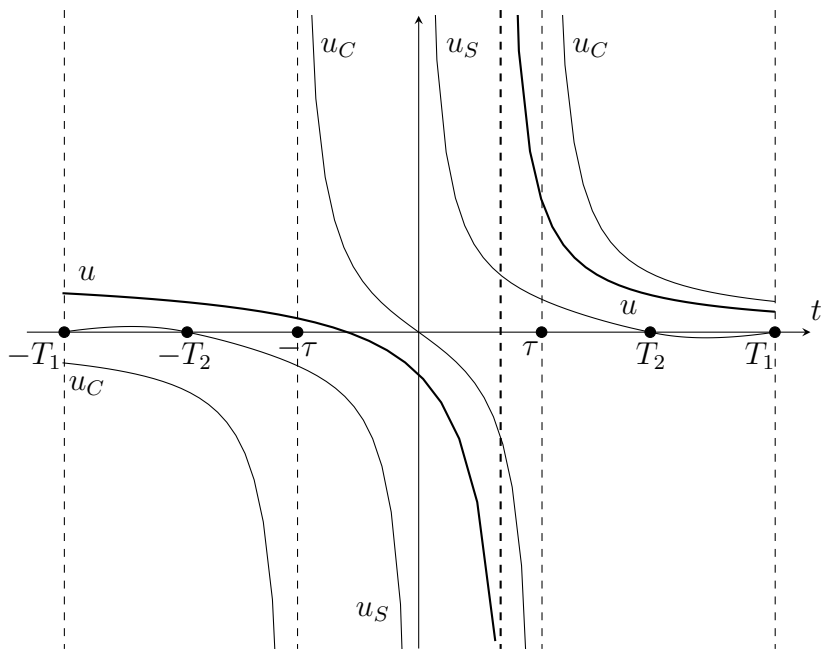
\begin{figure}[h]
\begin{tikzpicture}
\begin{axis}[width=12cm, height=10cm, axis x line=middle, axis y line=middle, xtick=\empty,  ytick=\empty, 
xmin=-2.2, xmax=2.2, ymin=-1, ymax=1]
\addplot[domain=-2: -1.3]{(x+1.3)*(x+2)/(4.2*x)};
\addplot[domain=-1.3: -0.05]{(x^3+1.3^3)*(x^3+2^3)/(200*x)};
\addplot[domain=0.05: 1.3]{(x^3-1.3^3)*(x^3-2^3)/(200*x)};
\addplot[domain=1.3: 2]{(x-1.3)*(x-2)/(4.2*x)};
\addplot[domain=-0.63: 0.63]{0.2*x/(x^2-0.49)};
\addplot[domain=0.77: 2]{0.1/(x-0.7)+0.02};
\addplot[domain=-2: -0.77]{0.1/(x+0.7)-0.02};
\addplot[thick, domain=-2: 0.43]{0.2/(x-0.6)+0.2};
\addplot[thick, domain=0.5: 2]{0.1/(x-0.45)};
\addplot[mark=*] coordinates {(-1.99, 0)};
\addplot[mark=*] coordinates {(2, 0)};
\addplot[mark=*] coordinates {(-1.3,0)};
\addplot[mark=*] coordinates {(1.3,0)};
\addplot[mark=*] coordinates {(-0.68,0)};
\addplot[mark=*] coordinates {(0.69,0)};
\end{axis}
\draw (10.5, 4.5) node{$t$};
\draw (4.6, 0.5) node{${u_S}$};
\draw (5.8, 8) node{${u_S}$};
\draw (0.8, 3.5) node{${u_C}$};
\draw (4.15, 8) node{${u_C}$};
\draw (7.5, 8) node{${u_C}$};
\draw (0.8, 5) node{$u$};
\draw (8, 4.55) node{$u$};
\draw (0.1, 3.9) node{${-T_1}$};
\draw (9.75, 3.9) node{${T_1}$};
\draw (2.1, 3.9) node{${-T_2}$};
\draw (3.6, 4) node{${-\tau}$};
\draw (6.7, 4) node{${\tau}$};
\draw (8.3, 3.9) node{${T_2}$};
\draw[dashed] (0.5, 8.5) -- (0.5,0);
\draw[dashed] (3.6, 8.5) -- (3.6,0);
\draw[thick, dashed] (6.3, 8.5) -- (6.3,0);
\draw[dashed] (6.85, 8.5) -- (6.85,0);
\draw[dashed] (9.95, 8.5) -- (9.95,0);
\end{tikzpicture}
\caption{Graphs of $u_S$, $u_C$ and $u$}
\end{figure}
\end{center}

By Poincar\'{e} recurrence theorem, almost every vector in $UT\mathscr{C}$ will come back infinitely many times.  For any geodesic $c$ entering the cap $\mathscr{C}$ at time $t_0$, when it returns to the cap again, say at time $t_1>t_0$, the image of the cone $\{J'(t_0)J(t_0)\geq 0\}$ under the translation will lie strictly in the interior of $\{J'(t_1)J(t_1)\geq 0\}$. By Wojkowski's cone field theory \cite{W}, the vectors with non-zero Lyapunov exponents form a set with positive Liouville measure. By Pesin's inequality (1) the geodesic flow has positive metric entropy.
\end{proof}

\section{Construction of a Non-ergodic DBG torus}
In this section we construct a conformal metric on $[-1,1]\times[-1,1]$ which is flat outside a disc and centrally symmetric inside the disc. More precisely we want to build  a function $g:[0,2]\rightarrow (0,1]$ such that the torus with conformal metric
$$ds^2=g(r)^2(dx^2+dy^2), \text{ where } r:=\sqrt{x^2+y^2}.\eqno (2)$$
is a non-ergodic DBG torus.

In order to get such a function $g$ we change our coordinate system to  geodesic polar coordinates. However before doing this we need some preliminary. 
\begin{definition}
We say a function $\rho: I\rightarrow \mathbb{R}$ is \textit{even} (resp. \textit{odd}) at a point $a\in I$ if all odd (resp. even) derivatives of $\rho$ vanish at $a$. 
\end{definition}

\begin{lemma}\label{lem2}
For any smooth function $\rho: \mathbb{R}_{\geq 0}\rightarrow \mathbb{R}_{\geq 0}$ which is odd at 0, $\rho'(0)=1$ and is positive except at 0, there exist smooth functions $g, l: \mathbb{R}_{\geq 0}\rightarrow \mathbb{R}_{\geq 0}$ such that $l$ is odd at 0, $l(0)=0, l'(r)=g(r),  g(0)=1, \rho(l(r))=rg(r)$, and $g$ is positive. 
\end{lemma}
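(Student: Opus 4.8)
The plan is to read the three conditions $l(0)=0$, $l'=g$, $\rho(l(r))=rg(r)$ as a single separable ODE for $l$, namely $\rho(l(r))=r\,l'(r)$, i.e. $l'(r)/\rho(l(r))=1/r$, with the initial condition $l(0)=0$ forced (if $l$ is continuous at $0$ then $\rho(l(0))=0$, and $\rho>0$ off $0$). A generic solution of $\frac{dl}{\rho(l)}=\frac{dr}{r}$ satisfies $l\sim c\,r$ near the origin, so the real content is to pin down the solution with $c=1$ and prove that it is smooth and odd at $0$, with $g=l'$ smooth, positive, and $g(0)=1$. The one delicate point is the singularity of the equation at $r=0$.

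First I would isolate the singular part. Since $\rho$ is odd at $0$ with $\rho(0)=0$, $\rho'(0)=1$, Hadamard's lemma gives $\rho(u)=u\,h(u)$ with $h$ smooth on $\mathbb{R}_{\geq 0}$, $h(0)=1$, and $h>0$ everywhere; extending $\rho$ to a genuinely odd smooth function across $0$ shows in addition that $h$ is even at $0$. Hence $\frac{1}{\rho(u)}-\frac{1}{u}=\frac{1-h(u)}{u\,h(u)}$ extends to a smooth function $\psi$ on $\mathbb{R}_{\geq 0}$ which is odd at $0$ (it is the product of $(1-h(u))/u$, odd at $0$, with $1/h(u)$, even at $0$). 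Put $\Psi(u)=\int_0^u\psi$; this is smooth, $\Psi(0)=0$, and $\Psi$ is even at $0$.

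Next I would solve the equation for the \emph{inverse} function. Writing $r$ as a function of $l$ turns the ODE into $\frac{d}{dl}\ln r=\frac{1}{\rho(l)}=\frac1l+\psi(l)$, which integrates, with the normalization that forces $r\sim l$, to the explicit formula $r=R(l):=l\,e^{\Psi(l)}$. One checks directly that $R$ is smooth on $\mathbb{R}_{\geq 0}$, $R(0)=0$, $R$ is odd at $0$ (it is $l$ times the even-at-$0$ factor $e^{\Psi(l)}$), and $R'(l)=e^{\Psi(l)}/h(l)>0$, so $R'(0)=1$. Thus $R$ is a smooth strictly increasing bijection onto its image, and $l:=R^{-1}$ is smooth with $l(0)=0$, $l'(0)=1$; extending $R$ to an odd local diffeomorphism near $0$ and inverting shows $l$ is odd at $0$. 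Finally set $g:=l'$; then $g(r)=1/R'(l(r))=h(l(r))\,e^{-\Psi(l(r))}>0$ with $g(0)=1$, and since $r=R(l(r))=l(r)\,e^{\Psi(l(r))}$ we get $r\,g(r)=l(r)\,h(l(r))=\rho(l(r))$, as required.

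The only step demanding genuine care is the bookkeeping at $r=0$: propagating the even/odd-at-$0$ structure through the factorization $\rho=uh$, the antiderivative $\Psi$, the exponential, and the inversion. Each of these is an instance of the elementary facts that evenness/oddness at $0$ are preserved by these operations and that a function odd (resp. even) at $0$ is the restriction of a genuinely odd (resp. even) smooth function near $0$, which lets one invoke uniqueness of smooth inverses. A minor side remark: $R$ need not be onto all of $\mathbb{R}_{\geq 0}$ if $\rho$ grows too fast, but this is immaterial here, since in our setting $\rho$ is eventually affine (the ambient metric is Euclidean away from the cap), so $\Psi$ stays bounded, $R(l)\to\infty$, and $l,g$ are defined on all of $\mathbb{R}_{\geq 0}$.
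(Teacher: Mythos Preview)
Your proposal is correct and follows essentially the same route as the paper: both pass to the inverse function, rewrite the equation as $\frac{dr}{r}=\frac{dl}{\rho(l)}$, split off the singular $1/l$ piece (you via $\rho=uh(u)$ and Hadamard, the paper via the Taylor expansion $\frac{1}{\rho}=\frac{1}{l}+l\tilde\rho(l)$), integrate, track the even/odd-at-$0$ structure through to conclude $r$ is odd at $0$, and then invert. Your version is slightly more explicit in that you write the closed formula $r=R(l)=l\,e^{\Psi(l)}$ and compute $R'(l)=e^{\Psi(l)}/h(l)$ directly, and you are also more careful than the paper in flagging the surjectivity issue for $R$ (which the paper's proof silently assumes); your resolution---that in the intended application $\rho$ is eventually affine so $\Psi$ stays bounded---is exactly right.
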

\begin{proof}
Since
$$\rho=r\frac{dl}{dr}, $$
we have
$$\frac{dr}{r}=\frac{dl}{\rho}. \eqno(\ast)$$
Both sides of $(\ast)$ have singularity at 0. Since $\rho$ is odd at 0, $\rho'(0)=1$, for small $l$ we have
$$\frac{1}{\rho}=\frac{1}{l}\left(\frac{1}{1+\rho^{(3)}(0)l^2/6+o(l^3)}\right)=\frac{1}{l}\left(\frac{1}{1+l^2O(1)}\right)=\frac{1}{l}(1+l^2\tilde{\rho}(l))=\frac{1}{l}+l\tilde{\rho}(l),$$
where $\tilde{\rho}$ is a smooth function that is even at 0. We integrate both sides of $(\ast)$ regarding $r$ as a function of $l$ with $r(0)=0$. Then we get
$$\lim_{l\rightarrow 0}(\ln r-\ln l)=\lim_{l\rightarrow 0}\int_0^l s\tilde{\rho}(s)ds=0.$$
Therefore $\lim_{l\rightarrow 0}\ln(r/l)=0$ and $\ln(r/l)$ is even at 0. By direct computation, it is now easy to see that $r/l$ is even at 0. This implies that $r$ is odd at 0. From $(\ast)$ we have
$$\frac{d\ln r}{dl}=\frac{dr}{rdl}=\frac{1}{\rho}>0.$$
Therefore $\ln r(l)$ is strictly increasing and smooth, so is $r(l)$. By the Inverse Function Theorem there exists a smooth $l: \mathbb{R}_{\geq 0}\rightarrow \mathbb{R}_{\geq 0}$ which is the inverse function of $r(l)$. Moreover $l(0)=0$, $l'(0)=1$ and $l$ is odd at 0. Finally we define $g(r):=l'(r)$. It is clear that $g$ is even at 0 and positive.
\end{proof}
By Lemma \ref{lem2} we have only to find $\rho: \mathbb{R}_{\geq 0}\rightarrow \mathbb{R}_{\geq 0}$ with the following properties:

(i) $\rho$ satisfies the conditions in Lemma \ref{lem2};

(ii) $\rho'(l_0)=\rho'(l_1)=0$ for some $0<l_0<l_1$.

(iii) Let $K(l):=-\rho''(l)/\rho(l)$. Then $K(l)>0$ on $[0,l_0]$, $K(l_1)<0$.

(iv) $K'(l)<0$ on $[0,l_1]$.

(v) There exists $l_2>l_1$ such that $K(l)$ is negative on $[l_1,l_2)$ and $\rho'(l)=1$ for $l\geq l_2$.

Indeed once we have such a function $\rho$, by Lemma \ref{lem2} we have smooth functions $g,l:\mathbb{R}_{\geq 0}\rightarrow \mathbb{R}_{\geq 0}$ with $\rho(l(r))=rg(r)$ and $l(r)=\int_0^r g(t)dt$. Consider the metric defined by (2). By changing the coordinate system to geodesic polar coordinates, the metric becomes
$$ds^2=dl^2+\rho(l)^2d\theta^2.\eqno(3)$$

Note that $\rho'(l)=0$ iff the parallel at $l$ is a geodesic, and the Gaussian curvature is given by $K(l)=-\rho''(l)/\rho(l)$. Let $r_i:=l^{-1}(l_i)$ for $i=0,1,2$. (ii) implies (a) in the definition of a non-ergodic DBG cap, while (b) can be derived from (iii) and (iv). (v) guarantees the metric is negatively curved on the annulus $\{r_1<r<r_2\}$ and is flat outside $\{r=r_2\}$. So once $\rho$ satisfies (i)-(v), the torus with metric (3) will be a non-ergodic DBG torus.\\

Here is the construction of $\rho(l)$:

For any $a>0$, let $\lambda_1: \mathbb{R}_{\geq 0}\rightarrow [0,1]$  be a $C^{\infty}$ function with the properties that $\lambda_1\equiv 1$ on $[0,\frac{1}{\sqrt{5a}}]$ and $\lambda_1\equiv 0$ on $[\frac{1}{2\sqrt{a}},+\infty)$. Let $\lambda_2: \mathbb{R}_{\geq 0}\rightarrow [0,1]$ be another $C^{\infty}$ function with $\lambda_2 \equiv 0$ on $[0,\frac{1}{\sqrt{5a}}]\cup[\frac{1}{2\sqrt{a}},+\infty)$ and positive on $(\frac{1}{\sqrt{5a}}, \frac{1}{2\sqrt{a}})$. Define $\rho''(l)$ by 
$$\rho''(l)=\lambda_1(l)(-30al+200a^2l^3)+C(1-\lambda_1(l))\lambda_2(l),$$
where $C$ is a positive constant such that $\int_0^{\infty}\rho''(l)dl=0$. Notice that $\rho''(l)=0$ on $[\frac{1}{2\sqrt{a}},+\infty)$. Define $\rho$ by setting $\rho(0)=0, \rho'(0)=1$. Then $\rho(l)=l-5al^3+10a^2l^5$ on $[0,\frac{1}{\sqrt{5a}}]$ and $\rho'(l)\equiv 1$ for $l\geq \frac{1}{2\sqrt{a}}$. The graph of $\rho$ is shown in Figure 3.

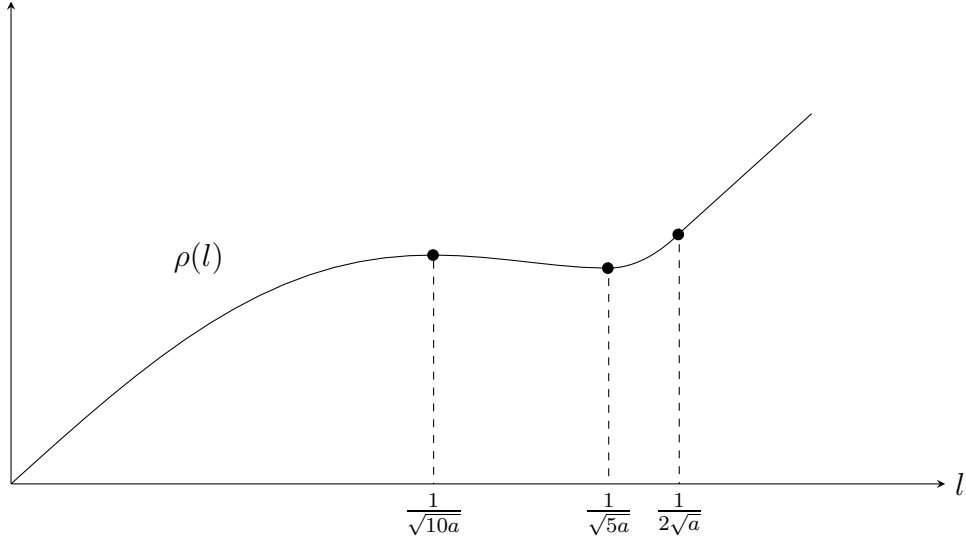
\begin{figure}
\begin{tikzpicture}
\begin{axis}[width=14cm, height=8cm, axis x line=bottom, axis y line=left, xtick=\empty,  ytick=\empty, xmin=0, xmax=0.7, ymin=0, ymax=0.4]
\addplot[domain=0: sqrt{0.2}]{(10*x^5-5*x^3+x)};
\addplot[domain=sqrt{0.2}: 0.5]{10*(x-sqrt{0.2})^2+2/(5*sqrt(5))};
\addplot[domain= 0.5: 0.6]{x+4-48*sqrt(5)/25};
\addplot[mark=*] coordinates {(sqrt(0.2), 0.4/sqrt 5)};
\addplot[mark=*] coordinates {(sqrt(0.1), 0.6/sqrt 10)};
\addplot[mark=*] coordinates {(0.5, 4.5-1.92*sqrt 5)};
\end{axis}
\draw (2.5, 3) node{$\rho(l)$};
\draw (17.77*sqrt{0.2}, -0.4) node{$\frac{1}{\sqrt{5a}}$};
\draw (17.77*sqrt{0.1}, -0.4) node{$\frac{1}{\sqrt{10a}}$};
\draw (17.77*0.5, -0.4) node{$\frac{1}{2\sqrt{a}}$};
\draw (17.77*0.71, 0) node{$l$};
\draw[dashed] (17.77*sqrt{0.2}, 2.95) -- (17.77*sqrt{0.2}, 0);
\draw[dashed] (17.77*sqrt{0.1}, 2.97) -- (17.77*sqrt{0.1}, 0);
\draw[dashed] (17.77*0.5, 3.4) -- (17.77*0.5, 0);
\end{tikzpicture}
\caption{Graph of $\rho$}
\end{figure}

It is easy to see that $\rho$ satisfies (i) and (ii) for $l_0=\frac{1}{\sqrt{10a}}$ and $l_1=\frac{1}{\sqrt{5a}}$. $\rho''(l)=-30al+200a^2l^3$ on $[0, l_1]$ and it is positive on $[l_1, \frac{1}{2\sqrt{a}})$, so $K(l)$ is positive on $\{l \leq \frac{1}{\sqrt{10a}}\}$ and negative in the annulus between $\{l= l_1\}$ and $\{l= \frac{1}{2\sqrt{a}}\}$. Hence $\rho$ satisfies (iii) and (v) for $l_2=\frac{1}{2\sqrt{a}}$. The last part to be verified is (iv). Since
$$K'(l)=-\frac{\rho\rho'''-\rho'\rho''}{\rho^2},$$
we only need to verify that $\rho\rho'''-\rho'\rho''=100a^2l^3(1+12al^2-40a^2l^4)$ is positive on $(0,\frac{1}{\sqrt{5a}}]$. This can be done by direct calculation. This finishes the construction.

\begin{remark}
The function $g$ constructed in this way is strictly decreasing on $[0,r_2]$ and constant for $r\geq r_2$ since 
$$\frac{d\rho}{dl}=\frac{d\rho}{dr}\frac{dr}{dl}=\frac{g+rg'}{g}=1+\frac{rg'}{g}$$
and $\rho'(l)<1$ on $(0,\frac{1}{2\sqrt{a}})$, $\rho'(l)=1$ for $l\geq \frac{1}{2\sqrt{a}}$. 
So the supremum of $g$ is $g(0)=1$. From Lemma \ref{lem2} we know that the lower bound is positive. 
\end{remark}
\begin{remark}\label{rem2}
If $g$ satisfies the condition that a torus with metric $g(r)^2(dx^2+dy^2)$ is non-ergodic DBG,  we can find a constant $\delta_0$ such that for all $\delta\in (-\delta_0,\delta_0)$, a torus with metric $(g(r)^2+\delta)(dx^2+dy^2)$ is also non-ergodic DBG. This follows from the fact that being a non-ergodic DBG torus is an open condition.
\end{remark}
\begin{remark}
By choosing a sufficiently large $a$ we can shrink the support of $g'$ to be as small as we want. Indeed notice that $\rho(s)\leq s$ and $\rho'(s)>0$ on $(\frac{1}{\sqrt{5a}}, \frac{1}{2\sqrt{a}})$. Therefore
\begin{eqnarray*}
\max_{s\geq 0} \,\,\,s-\rho(s)&=&\max_{0\leq s\leq \frac{1}{2\sqrt{a}}}\int_0^s1-\rho'(t)dt\\
&=& \frac{1}{\sqrt{5a}}-\rho(\frac{1}{\sqrt{5a}})+\max_{0\leq s\leq \frac{1}{2\sqrt{a}}}\int_{\frac{1}{\sqrt{5a}}}^s 1-\rho'(t)dt\\
&<&  \frac{1}{\sqrt{5a}}-\rho(\frac{1}{\sqrt{5a}})+\max_{0\leq s\leq \frac{1}{2\sqrt{a}}}s-\frac{1}{\sqrt{5a}}<\frac{1}{3\sqrt{a}}.
\end{eqnarray*}

From $(\ast)$ we have 
\begin{eqnarray*}
\ln r_2-\ln (\frac{1}{2\sqrt{a}})&=&\int_0^{\frac{1}{2\sqrt{a}}}\left(\frac{1}{\rho(s)}-\frac{1}{s}\right)ds\\
&=&\int_0^{\frac{1}{\sqrt{5a}}}\left(\frac{1}{s-5as^3+10a^2s^5}-\frac{1}{s}\right)ds+\int_{\frac{1}{\sqrt{5a}}}^{\frac{1}{2\sqrt{a}}}\left(\frac{1}{\rho(s)}-\frac{1}{s}\right)ds
\end{eqnarray*}
\begin{eqnarray*}
&<&\int_0^{\frac{1}{\sqrt{5a}}}\frac{5as-10a^2s^3}{1-5as^2+10a^2s^4}ds+\int_{\frac{1}{\sqrt{5a}}}^{\frac{1}{2\sqrt{a}}} \left(\frac{1}{s-\frac{1}{3\sqrt{a}}}-\frac{1}{s}\right)ds\\
&<& 10a \int_0^{\frac{1}{\sqrt{5a}}}2s-4as^3ds+ \ln\left(1-\frac{1}{3s\sqrt{a}}\right)\Big|_{\frac{1}{\sqrt{5a}}}^{\frac{1}{2\sqrt{a}}}< 2-\ln(3-\sqrt{5}).
\end{eqnarray*}
Thus $r_2\rightarrow 0$ as $a\rightarrow \infty$.
\end{remark}

\section{Perturbation of the Hamiltonian $H_0$}
Suppose the fundamental domain of the deck group on the universal cover of our torus $\mathbb{T}^2\cong \mathbb{R}^2/\mathbb{Z}^2$ is $\{-1< x,y< 1\}$. We use $\alpha, \beta$ to denote the coordinates in the cotangent space and denote $B^{*}\mathbb{T}^2:=\{(x,y,\alpha,\beta)\in T^*\mathbb{T}^2: \alpha^2+\beta^2<1\}$.  In this section we want to perturb the kinetic Hamiltonian
$$H_0(x,y,\alpha,\beta):=\frac{\alpha^2+\beta^2}{2}$$
in such a way that the Hamiltonian flow if the resulting Hamiltonian has positive metric entropy.  More precisely, we want to prove the following:
\begin{lemma}\label{lem3}
There exists a family $\{H_{\epsilon}\}_{\epsilon> 0}$ of smooth perturbations of $H_0$ such that for all $\epsilon>0$, there exists an open interval $I_{\epsilon}$ with the property that for any $h\in I_{\epsilon}$, the Hamiltonian flow $\Phi_{H_{\epsilon}}^t$ on the level set $\{H_{\epsilon}=h\}$ has positive metric entropy. 
\end{lemma}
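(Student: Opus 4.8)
The plan is to use the Maupertuis principle to convert the problem about a Hamiltonian flow on a fixed energy level into a problem about a geodesic flow, so that Lemma \ref{lem1} can be applied. Concretely, for a mechanical-type Hamiltonian of the form $H(x,y,\alpha,\beta) = \tfrac{1}{2}G(x,y)(\alpha^2+\beta^2) + V(x,y)$, the orbits on the level set $\{H = h\}$ coincide, up to time reparametrization, with the geodesics of the Jacobi metric $(h - V)\,G^{-1}(dx^2+dy^2)$ — which is conformally flat of precisely the type studied in Sections 3 and 4. Since metric entropy is an invariant of the flow up to time change (it scales, but positivity is preserved), it suffices to arrange that this Jacobi metric is a non-ergodic DBG torus for all $h$ in some open interval $I_\epsilon$.

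First I would take $V \equiv 0$ and set $H_\epsilon(x,y,\alpha,\beta) = \tfrac{1}{2}G_\epsilon(x,y)(\alpha^2+\beta^2)$, so that on $\{H_\epsilon = h\}$ the Jacobi metric is simply $2h\,G_\epsilon^{-1}(dx^2+dy^2)$; rescaling by the constant $2h$ does not change the geodesics as unparametrized curves, so the geodesic flow on $\{H_\epsilon = h\}$ is (a time change of) the geodesic flow of $G_\epsilon^{-1}(dx^2+dy^2)$, independent of $h$. Choosing $G_\epsilon(x,y) = g(r)^{-2}$, with $g$ the function built in Section 4 (scaled so that $g \equiv 1$ outside a small disc, which by the last remark of Section 4 can be made to fit inside the fundamental domain), makes this Jacobi metric exactly the non-ergodic DBG torus metric $g(r)^2(dx^2+dy^2)$ from (2). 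Then Lemma \ref{lem1} gives positive metric entropy for the geodesic flow, hence for $\Phi_{H_\epsilon}^t$ on every level $\{H_\epsilon = h\}$, $h > 0$; one takes $I_\epsilon = (0,\infty)$ or any subinterval. The smallness of the perturbation in the $C^\infty$ sense follows because $G_\epsilon - 1 = g^{-2} - 1$ is supported in a small disc and, by the remarks in Section 4, can be taken $\epsilon$-small in every $C^k$ norm by choosing the parameter $a$ large and rescaling $g$ appropriately (or by using the $\delta$-flexibility of Remark \ref{rem2}).

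The main subtlety — and what forces the statement to quantify over an \emph{interval} $I_\epsilon$ rather than a single energy — is that later in the paper one needs robustness of the construction: the return map on a section must be perturbed, and Liouville measure on nearby level sets must all see the hyperbolic behavior. With the homogeneous choice above this is automatic since all positive levels give the same geodesic flow up to time change, but if one instead wants a genuinely localized perturbation (so that $H_\epsilon = H_0$ outside a compact region of $B^*\mathbb{T}^2$, which matters for gluing via the dual lens map later), one should cut off $G_\epsilon - 1$ in the fiber variables as well, say $H_\epsilon = H_0 + \psi(\alpha^2+\beta^2)\,(G_\epsilon(x,y)-1)\tfrac{\alpha^2+\beta^2}{2}$ with $\psi$ a bump supported near the energy value of interest; then openness of the DBG condition (Remark \ref{rem2}) guarantees the Jacobi metric remains non-ergodic DBG for all $h$ in a small interval $I_\epsilon$ around that value. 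The thing to be careful about is that the cutoff in the fiber direction does not destroy the mechanical form of $H_\epsilon$ on the relevant energy shell — but since $\psi$ can be taken identically $1$ near the chosen energy, on that shell $H_\epsilon$ still has the clean form $\tfrac12 G_\epsilon(\alpha^2+\beta^2)$ and the Maupertuis reduction goes through verbatim.

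In short: (1) write $H_\epsilon$ in mechanical form with $G_\epsilon = g^{-2}$ and $g$ from Section 4, possibly with a fiber cutoff to localize; (2) apply Maupertuis to identify orbits on $\{H_\epsilon = h\}$ with geodesics of a conformally flat metric that is (a rescaling of) the non-ergodic DBG torus; (3) invoke Lemma \ref{lem1} to conclude positive metric entropy, using the scale-invariance of positivity of metric entropy under time reparametrization; (4) use the estimates and openness remarks of Section 4 to secure both the $C^\infty$-smallness and the open interval $I_\epsilon$ of admissible energies. The only real obstacle is bookkeeping the cutoffs so that the mechanical form survives on the energy shell and the DBG conditions survive the perturbation — both handled by the openness statement in Remark \ref{rem2}.
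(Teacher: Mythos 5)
Your overall skeleton (Maupertuis reduction to a conformally flat Jacobi metric, Lemma \ref{lem1} for positivity, invariance of positivity of entropy under time change, Remark \ref{rem2} for the open interval of energies) matches the paper's strategy. But your realization of it has a genuine gap at the one place where the construction is delicate: the $C^\infty$-smallness of the perturbation. You perturb the \emph{kinetic} term, taking $H_\epsilon=\tfrac12 g^{-2}(\alpha^2+\beta^2)$ (possibly with a fiber cutoff), and claim $g^{-2}-1$ can be made $\epsilon$-small in every $C^k$ norm ``by choosing $a$ large and rescaling $g$ appropriately.'' This is false. Taking $a$ large only shrinks the \emph{support} of $g'$ (the radius $r_2\to 0$); the amplitude of $1-g^2$ stays of definite size, and the derivatives of $g$ in fact blow up, because the DBG cap forces $\rho'(l)=1+rg'/g$ to vanish at the two parallel geodesics, i.e.\ $|g'|\sim g/r$ there. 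And if you ``rescale $g$ toward $1$'' to gain smallness, $\rho'$ no longer vanishes, so the parallel geodesics and the curvature profile of the cap disappear: the DBG structure is not invariant under flattening the conformal factor, and Remark \ref{rem2} (which is about adding a small constant $\delta$ to $g^2$, not about shrinking $g^2-1$ to zero) does not rescue this. Since the family must satisfy $H_\epsilon\to H_0$ in $C^\infty$ (this is exactly what is used in Section 7 via Lemma \ref{lem5} and in the dual lens map step), your first version with $G_\epsilon=g^{-2}$ and $I_\epsilon=(0,\infty)$ proves a statement about a fixed, non-small Hamiltonian, not the lemma; and your cutoff version still carries the order-one factor $g^{-2}-1$ on the relevant shell unless the shell itself is pushed to low energy, which your argument does not do and which, with a shrinking fiber cutoff, fails to give smallness of higher derivatives.

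The paper resolves precisely this tension by perturbing with a \emph{potential} of size $\epsilon$ rather than the kinetic form: $H_\epsilon=H_0+\epsilon\,(1-g(r)^2)\,\xi(\alpha^2+\beta^2)$ with a fixed cutoff $\xi$, so the $C^\infty$ norm of the perturbation is $\epsilon$ times a fixed constant and trivially tends to $0$. The price is that one must look at the low energy level $h=\epsilon$, where $\xi\equiv 1$ and the Maupertuis (Jacobi) metric is $(h-\epsilon(1-g^2))(dx^2+dy^2)=\epsilon\,g(r)^2(dx^2+dy^2)$ --- the full, unflattened DBG metric merely scaled by the constant $\epsilon$, so Lemma \ref{lem1} applies; for $h=\epsilon(1+\delta)$ with $|\delta|<\delta_0$ the Jacobi metric is $\epsilon(g^2+\delta)(dx^2+dy^2)$ and Remark \ref{rem2} gives the interval $I_\epsilon=(\epsilon-\epsilon\delta_0,\epsilon+\epsilon\delta_0)$. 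In short, smallness comes from the prefactor $\epsilon$ and from working on an $\epsilon$-small energy shell, not from making the conformal factor close to $1$; that substitution is the missing idea in your proposal.
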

\begin{proof}
Let $\xi:\mathbb{R}_{\geq 0}\rightarrow [0,1]$ be a smooth function with $\xi\equiv 1$ on $[0,1/3]$ and $\xi\equiv 0$ on $[2/3,1]$. And let $g$ be the function we built in Section 4. We define 
$$H_{\epsilon}:=H_0+\epsilon (1-g(r)^2)\xi(\alpha^2+\beta^2), \text{ where } r=\sqrt{x^2+y^2}. $$
Since $g$ is positive and $0\leq 1-g^2<1$ (by Remark 1), we have 
$$\epsilon>\max_{(x,y)\in \mathbb{T}^2}\epsilon (1-g(r)^2)\xi(\alpha^2+\beta^2).$$
Notice that if $H_{\epsilon}<1/6$ then $\alpha^2+\beta^2<1/3$, therefore $\xi\equiv 1$ whenever the total energy is small. By the Maupertuis principle, the Hamiltonian flow $\Phi^t_{H_{\epsilon}}$ on the level set $\{H_{\epsilon}=\epsilon\}$ is a time change of the geodesic flow on $\mathbb{T}^2$ with metric 
$$ds^2=\epsilon g(r)^2(dx^2+dy^2). $$
This metric has positive metric entropy since, by Lemma \ref{lem1},  the metric $ds^2=g(r)^2(dx^2+dy^2)$ does. 

Let $\delta_0$ be the constant we get from Remark \ref{rem2} and define $I_{\epsilon}:=(\epsilon-\epsilon\delta_0, \epsilon+\epsilon\delta_0)$. By using Maupertuis principle again we prove the lemma.
\end{proof}

\section{Perturbation of $\tilde{H}_0$}
In this section we prove that a smooth perturbation of 
$$\tilde{H}_0(x,y,\alpha,\beta):=-\sqrt{1-\alpha^2-\beta^2}$$
can be derived from a suitable perturbation of $H_0$. Since this result holds for all degrees of freedom, we use $(\textbf{q}, \textbf{p})$ to denote the coordinates instead of $(x,y,\alpha,\beta)$. 
 
Suppose $\mathbb{T}^n=\mathbb{R}^n/\mathbb{Z}^n$ has coordinates $\textbf{q}=(q_1,...,q_n)$ and let $\textbf{p}=(p_1,...,p_n)$ be the coordinates in the cotangent bundle. Denote $B^*\mathbb{T}^n=\{(\textbf{q},\textbf{p}): \sum p^2_i<1\}$. Define
$$H_0(\textbf{q}, \textbf{p}):= \frac{1}{2}\sum_{i=1}^n p^2_i, \,\,\,\,\,\,\,\,\tilde{H}_0(\textbf{q}, \textbf{p}):=-\sqrt{1-2H_0(\textbf{q}, \textbf{p})}.$$
Then
$$\Phi_{\tilde{H}_{0}}^t(\textbf{q}, \textbf{p})=(\textbf{q}+\frac{t\textbf{p}}{\sqrt{1-\sum p^2_i}},\textbf{p}).$$
Let $V(\textbf{q}, \textbf{p})$ be a $C^2$-smooth function on $B^*\mathbb{T}^n$. We perturb $H_0$ and $\tilde{H}_0$ by $V$ in the following way:
$$H_{\epsilon}(\textbf{q},\textbf{p}):=  \frac{1}{2}\sum_{i=1}^n p^2_i+\epsilon V(\textbf{q},\textbf{p}), \,\,\,\,\,\,\,\,\tilde{H}_{\epsilon}(\textbf{q},\textbf{p}):=-\sqrt{1-2H_{\epsilon}(\textbf{q}, \textbf{p})}.$$
Then we have
\begin{lemma}\label{lem5}
If $\text{supp}V\subseteq \{\sum p^2_i\leq C<1\}$ for some $C\in\mathbb{R}_+$, then for every $\delta,m,\mathcal{T}>0$, there exists $\epsilon=\epsilon(V, \delta,m, \mathcal{T})>0$ such that for each $0\leq T \leq \mathcal{T}$ we have
$$||\Phi_{\tilde{H}_{\epsilon}}^T-\Phi_{\tilde{H}_{0}}^T||_{C^m(B^*\mathbb{T}^n)}<\delta.$$
\end{lemma}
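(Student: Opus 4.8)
The plan is to reduce the statement about $\tilde H_\epsilon$ to a statement about $H_\epsilon$, where the unperturbed flow is genuinely integrable (linear on the fibers) and the comparison is easy. The key observation is that $\tilde H_\epsilon = f(H_\epsilon)$ for the fixed smooth function $f(s) = -\sqrt{1-2s}$, which is defined and smooth on $\{H_\epsilon < 1/2\}$. Since $H_\epsilon$ and $\tilde H_\epsilon$ are functionally dependent, their Hamiltonian vector fields are parallel: $X_{\tilde H_\epsilon} = f'(H_\epsilon)\, X_{H_\epsilon}$, so the two flows differ only by a time reparametrization that is constant along each orbit (because $H_\epsilon$ is a first integral of its own flow). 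Concretely, on the level set $\{H_\epsilon = h\}$ one has $\Phi^t_{\tilde H_\epsilon} = \Phi^{f'(h)\,t}_{H_\epsilon}$, and $f'(h) = (1-2h)^{-1/2}$. The same holds with $\epsilon = 0$.

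First I would make the reduction to $H_\epsilon$ precise. Because $\mathrm{supp}\,V \subseteq \{\sum p_i^2 \le C < 1\}$, outside this region $H_\epsilon = H_0$ and $\tilde H_\epsilon = \tilde H_0$, so both flows agree there for all time and the $C^m$ estimate is trivial on that part of $B^*\mathbb{T}^n$; one only needs to work on the compact region $\{\sum p_i^2 \le C'\}$ for some $C < C' < 1$, where for $\epsilon$ small $H_\epsilon$ still takes values in a fixed compact subinterval of $(-\infty, 1/2)$ on which $f$, $f'$ are smooth with bounded derivatives. Second, I would establish a general lemma: if $\tilde H = f \circ H$ with $f$ smooth on the relevant range, then for every $\mathbf{z}$, $\Phi^T_{\tilde H}(\mathbf{z}) = \Phi^{\sigma(\mathbf z, T)}_{H}(\mathbf z)$ where $\sigma(\mathbf z,T) = f'(H(\mathbf z))\cdot T$; this is just integrating $\dot{\mathbf z} = f'(H(\mathbf z))X_H(\mathbf z)$ and using $H$ constant along the orbit. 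So $\Phi^T_{\tilde H_\epsilon}(\mathbf z) = \Phi^{g_\epsilon(\mathbf z)T}_{H_\epsilon}(\mathbf z)$ with $g_\epsilon(\mathbf z) = (1 - 2H_\epsilon(\mathbf z))^{-1/2}$, and likewise $\Phi^T_{\tilde H_0}(\mathbf z) = \Phi^{g_0(\mathbf z)T}_{H_0}(\mathbf z)$.

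Third, I would prove the needed closeness of the \emph{unperturbed-type} flows in $C^m$: namely that $\|\Phi^S_{H_\epsilon} - \Phi^S_{H_0}\|_{C^m}$ is small, uniformly for $S$ in a bounded interval, once $\epsilon$ is small. This is standard smooth dependence of solutions of ODEs on parameters: $X_{H_\epsilon} = X_{H_0} + \epsilon X_V$, the vector fields are $C^\infty$ on the compact region, and the time-$S$ maps depend smoothly (hence continuously in every $C^m$ norm on compacta, for $S$ bounded) on $\epsilon$; Gronwall-type estimates give explicit bounds. The bound on $S$ we need is $|S| \le \|g_\epsilon\|_\infty \mathcal T \le (1-2\sup H_\epsilon)^{-1/2}\mathcal T$, which is $\le$ some fixed $\mathcal T'$ for all small $\epsilon$ since $\sup H_\epsilon$ is bounded away from $1/2$. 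Finally I would assemble the pieces: write
$$\Phi^T_{\tilde H_\epsilon} - \Phi^T_{\tilde H_0} = \big(\Phi^{g_\epsilon(\cdot)T}_{H_\epsilon} - \Phi^{g_\epsilon(\cdot)T}_{H_0}\big) + \big(\Phi^{g_\epsilon(\cdot)T}_{H_0} - \Phi^{g_0(\cdot)T}_{H_0}\big),$$
bound the first bracket using step three (the composition with the smooth, $\epsilon$-uniformly-$C^m$-bounded function $\mathbf z \mapsto g_\epsilon(\mathbf z)T$ only costs a constant via the chain rule and Faà di Bruno), and bound the second bracket by noting $g_\epsilon \to g_0$ in $C^m$ on the compact region (since $H_\epsilon \to H_0$ in $C^m$ and $f'$ is smooth) together with smoothness of $(\mathbf z, S)\mapsto \Phi^S_{H_0}(\mathbf z)$ in $S$ as well; both go to $0$ as $\epsilon \to 0$ uniformly in $T \in [0,\mathcal T]$. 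Choosing $\epsilon$ small enough makes the total $< \delta$.

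The main obstacle is purely bookkeeping rather than conceptual: controlling the $C^m$ norm of a \emph{composition} and of a time-reparametrized flow. One must check that differentiating $\mathbf z \mapsto \Phi^{g_\epsilon(\mathbf z)T}_{H_\epsilon}(\mathbf z)$ up to order $m$ produces only terms involving (i) space derivatives of $\Phi^S_H$ up to order $m$, (ii) the $S$-derivative $X_{H_\epsilon}\circ\Phi^S_{H_\epsilon}$ and its space derivatives, and (iii) derivatives of $g_\epsilon$ up to order $m$ — all of which are bounded uniformly in $\epsilon$ small and $T\in[0,\mathcal T]$ on the compact region, and the \emph{differences} of which are $O(\epsilon)$ — so that Faà di Bruno's formula yields the claimed smallness. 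This is where one needs the hypothesis $\mathrm{supp}\,V \subseteq \{\sum p_i^2 \le C < 1\}$ in an essential way: it keeps everything on a fixed compact set away from the singularity of $f$ at $\sum p_i^2 = 1$, so all these derivatives stay bounded.
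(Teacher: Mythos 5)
Your proposal is correct, but it is organized differently from the paper's argument. The paper works directly with the Hamilton equations of $\tilde H_\epsilon$: using that $E=H_\epsilon$ is constant along each orbit (so the denominator $\sqrt{1-2E}$ is a constant bounded away from $0$), it integrates $\dot{\mathbf q},\dot{\mathbf p}$ explicitly, writes $\Delta\mathbf p$ and $\Delta\mathbf q$ as integrals carrying explicit factors of $\epsilon$, and reads off the uniform $C^m$ smallness from these formulas (after the same reduction you make: off $\{\sum p_i^2\le C\}$ the momenta are constant, the orbit never meets $\mathrm{supp}\,V$, and the two flows coincide). You instead exploit the functional dependence $\tilde H_\epsilon=f(H_\epsilon)$, $f(s)=-\sqrt{1-2s}$, to write $\Phi^T_{\tilde H_\epsilon}(\mathbf z)=\Phi^{f'(H_\epsilon(\mathbf z))T}_{H_\epsilon}(\mathbf z)$, and then reduce everything to (i) classical $C^m$ smooth dependence of the $H_\epsilon$-flow on the parameter $\epsilon$ over a bounded time interval, (ii) $g_\epsilon\to g_0$ in $C^m$, and (iii) Fa\`a di Bruno/composition bookkeeping. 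Both arguments hinge on the same two facts — the flows agree outside the momentum support of $V$, and energy conservation keeps orbits in a fixed compact set away from the singular sphere $\sum p_i^2=1$ so all derivatives are uniformly controlled. What your route buys is a cleaner treatment of the higher-order derivatives: the paper's displayed computation really exhibits only the $C^0$ (and implicitly first-order) smallness and then asserts the $C^m$ statement, whereas you delegate that content to the standard smooth-dependence theorem at the price of explicit composition-norm estimates; the paper's computation, on the other hand, is self-contained and yields explicit $O(\epsilon)$ bounds without invoking any general theory. One point to keep explicit if you write this up: in your first bracket you need the difference of the flows $(\mathbf z,S)\mapsto\Phi^S_{H_\epsilon}(\mathbf z)-\Phi^S_{H_0}(\mathbf z)$ to be small in $C^m$ jointly in $(\mathbf z,S)$ (not just for each fixed $S$), since you then precompose with $\mathbf z\mapsto(\mathbf z,g_\epsilon(\mathbf z)T)$; your remark about the $S$-derivative being $X_{H_\epsilon}\circ\Phi^S_{H_\epsilon}$ shows you are aware of this, and it is indeed what the standard parameter-dependence theorem provides.
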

\begin{proof}
Denote $\Phi_{\tilde{H}_{\epsilon}}^T(\textbf{q},\textbf{p})-\Phi_{\tilde{H}_{0}}^T(\textbf{q},\textbf{p})$ by $(\Delta \textbf{q}, \Delta \textbf{p})$ as they usually do this in calculus books. Put $(\textbf{q}(t),\textbf{p}(t)):=\Phi_{\tilde{H}_{\epsilon}}^t(\textbf{q},\textbf{p})$. Suppose that ${H}_{\epsilon}(\textbf{q},\textbf{p})=E$. Then
$$\dot {\textbf{q}} (t)=\frac{\partial \tilde{H}_\epsilon}{\partial \textbf{p}}=\frac{\textbf{p}+\epsilon V_{\textbf{p}}}{\sqrt{1-2E}},\,\,\,\,\,\,\,\,\dot {\textbf{p}}(t)=-\frac{\partial \tilde{H}_\epsilon}{\partial \textbf{q}}=-\frac{\epsilon V_{\textbf{q}}}{\sqrt{1-2E}}.$$
If $\sum p^2_i>C$, then $\dot {\textbf{p}}(t)\equiv 0$, hence $\Delta \textbf{p}=0$. Consider the trajectory $(\textbf{q}(t),\textbf{p}(t))$, $V_p$ vanishes along it, hence $\Delta \textbf{q}=0$. Therefore we only need to consider the case $\sum p^2_i\leq C$. Since $V$ is compactly supported we may assume that $\epsilon$ is small enough so that $\sum p^2_i+2\epsilon V<(1+C)/2<1$. In this case
$$\Delta \textbf{p}=\int_0^T \dot{\textbf{p}}(t)dt=-\int_0^T  \frac{\epsilon V_{\textbf{q}}}{\sqrt{1-2E}}dt=-\frac{\epsilon}{\sqrt{1-\sum p^2_i-2\epsilon V}}\int^T_0 V_{\textbf{q}} dt.$$
{\small
\begin{eqnarray*}
&\Delta \textbf{q}& = \int_0^T \dot{\textbf{q}}(t)dt-\frac{\textbf{p}T}{\sqrt{1-\sum p^2_i}}=\int_0^T \left(\dot{\textbf{q}}(0)+\int_0^t \ddot{\textbf{q}}(s)ds \right)dt-\frac{\textbf{p}T}{\sqrt{1-\sum p^2_i}}\\
&=& T\left(\sqrt{1-\sum p^2_i}-\sqrt{1-\sum p^2_i-2\epsilon V}\right)_{\textbf{p}}+\frac{\int_0^T\int_0^t \dot{\textbf{p}}(s)+\epsilon\dot{\textbf{p}}(s)\cdot V_{\textbf{p}\textbf{p}}+ \epsilon\dot{\textbf{q}}(s)\cdot V_{\textbf{q}\textbf{p}}dsdt}{\sqrt{1-2E}}\\
&=& T\left(\sqrt{1-\sum p^2_i}-\sqrt{1-\sum p^2_i-2\epsilon V}\right)_{\textbf{p}}+\frac{\int_0^T\int_0^t -\epsilon V_{\textbf{q}}-\epsilon^2 V_{\textbf{q}}\cdot V_{\textbf{p}\textbf{p}}+\epsilon(\textbf{p}+\epsilon V_{\textbf{p}})\cdot V_{\textbf{q}\textbf{p}}dsdt}{1-\sum p^2_i-2\epsilon V}.
\end{eqnarray*}}
We can see from the above calculation that since $\sum p^2_i+2\epsilon V<(1+C)/2<1$, $(\Delta \textbf{q}, \Delta \textbf{p})$ converges to 0 uniformly in $C^m$ as $\epsilon\rightarrow 0$.
\end{proof}

\section{The Burago-Ivanov Theorem}
Here we use the notions and definitions from \cite{BI}. 

A Finsler metric $\varphi$ on an $n$-dimensional disc $D$ is called \textit{simple} if it satisfies the following three conditions:

(S1) Every pair of points in $D$ is connected by a unique geodesic.

(S2) Geodesics depend smoothly on their endpoints.

(S3) The boundary is strictly convex, that is, geodesics never touch it at their interior points.

Once $(D,\varphi)$ is simple, denote by $U_{in}, U_{out}$ the set of inward, outward pointing unit tangent vectors with base points in $\partial D$ respectively. With any vector $\nu\in U_{in}$, we can associate a unique vector $\beta(\nu)\in U_{out}$, namely the tangent vector of the (unique) geodesic  with initial velocity $\nu$ at its next intersection point with $\partial D$. This defines a map $\beta: U_{in}\rightarrow U_{out}$, which is called the lens map of $\varphi$. If $\varphi$ is reversible, then the lens map is reversible in the following sense: $-\beta(-\beta(\nu))=\nu$ for every $\nu\in U_{in}$.

We denote by $UT^*D$ the unit sphere bundle with respect to the dual norm $\varphi^*$. Let $\mathscr{L}: TD\rightarrow T^*D$ be the Legendre transform of the Lagrangian $\varphi^2/2$. It maps $UTD$ to $UT^*D$. For a tangent vector $\nu\in UT_xD$, its Legendre transform $\mathscr{L}(\nu)$ is the unique covector $\chi\in U_x^*D$ such that $\chi(\nu)=1$.

Then consider subsets $U^*_{in}=\mathscr{L}(U_{in})$ and $U^*_{out}=\mathscr{L}(U_{out})$ of $UT^*D$. The dual lens map of $\varphi$ is the map $\sigma:U^*_{in}\rightarrow U^*_{out}$ given by $\sigma:=\mathscr{L}\circ \beta\circ \mathscr{L}^{-1}$ where $\beta$ is the lens map of $\varphi$. If $\varphi$ is reversible then $\sigma$ is symmetric in the sense that $-\sigma(-\sigma(\chi))=\chi$ for all $\chi\in U^*_{in}$. 

Note that $U^*_{in}$ and $U^*_{out}$ are $(2n-2)$-dimensional submanifolds of $T^*D$. The restriction of the canonical symplectic 2-form of $T^*D$ to $U^*_{in}$ and $U^*_{out}$ determines the symplectic structure. And the dual lens map $\sigma$ is symplectic. In \cite{BI}, Burago and Ivanov proved the following theorem:

\begin{theorem}[Burago-Ivanov \cite{BI}]\label{thm2}
Assume that $n\geq 3$. Let $\varphi$ be a simple metric on $D=D^n$ and $\sigma$ its dual lens map. Let $W$ be the complement of a compact set in $U^*_{in}$. Then every sufficiently small symplectic perturbation $\tilde{\sigma}$ of $\sigma$ such that $\tilde{\sigma}|_W=\sigma|_W$ is realized by the dual lens map of a simple metric $\tilde{\varphi}$ which coincides with $\varphi$ in some neighborhood of $\partial D$.

The choice of $\tilde{\varphi}$ can be made in such a way that $\tilde{\varphi}$ converges to $\varphi$ whenever $\tilde{\sigma}$ converges to $\sigma$ (in $C^{\infty}$). In addition, if $\varphi$ is a reversible Finsler metric and $\tilde{\sigma}$ is symmetric then $\tilde{\varphi}$ can be chosen reversible as well.
\end{theorem}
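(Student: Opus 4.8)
I would reconstruct the argument of Burago and Ivanov from \cite{BI}, whose guiding principle is that Finsler metrics are far more flexible than Riemannian ones, so a prescribed symplectic modification of the scattering data can be installed by hand in the interior of $D$. \textbf{Step 1 (geometric reformulation).} First I would record the dictionary between simple Finsler metrics and their geodesic flows: a simple metric $\varphi$ is encoded by its unit cosphere bundle $\Sigma_\varphi=\{(\varphi^*)^2/2=1/2\}\subset T^*D$, a smooth hypersurface meeting each fibre $T^*_xD$ in a strictly convex hypersurface enclosing the origin; the geodesic flow is (up to reparametrisation) the characteristic flow of $\Sigma_\varphi$, and $\sigma$ is the first-return map of this flow from $U^*_{in}$ to $U^*_{out}$. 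Conversely any fibrewise strictly convex hypersurface agreeing with $\Sigma_\varphi$ over a neighbourhood of $\partial D$ is the unit cosphere bundle of a simple metric, provided its characteristic flow still satisfies (S1)--(S3); since (S1)--(S3) are open conditions they persist under small deformations. So realising $\tilde\sigma$ amounts to producing an interior-supported deformation $\Sigma_\varphi\rightsquigarrow\tilde\Sigma$ whose characteristic flow scatters according to $\tilde\sigma$.

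\textbf{Step 2 (localisation and decomposition).} The map $\tilde\sigma\circ\sigma^{-1}$ is a symplectomorphism of $U^*_{out}$, $C^\infty$-close to the identity and equal to it off a compact set. Using a partition of unity on that compact set, refined along the flow direction, I would factor it as $\psi_N\circ\cdots\circ\psi_1$ with each $\psi_j$ close to the identity and supported in a small Darboux ball through which the unperturbed flow travels almost straight; the generating-function description of near-identity symplectomorphisms lets one make each $\psi_j$ as small as desired at the cost of enlarging $N$. It then suffices to realise one such $\psi_j$ by a deformation of $\Sigma_\varphi$ supported in a small flow-box $B_j$ around a short geodesic segment, because concatenating flow-boxes concatenates scattering maps.

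\textbf{Step 3 (local realisation, the crux).} In a flow-box around a short geodesic segment the characteristic flow is conjugate to the trivial suspension of the identity on a $(2n-2)$-ball, and the goal is to modify $\Sigma_\varphi$ inside $B_j$ so that this becomes the suspension of $\psi_j$. Concretely one prescribes, smoothly in the base point, a new indicatrix differing from the old one only near the relevant covector and bending nearby geodesics by precisely the amount dictated by a chosen generating function of $\psi_j$; the freedom to deform the convex body $\{\varphi^*\le 1\}\cap T^*_xD$ almost arbitrarily (subject only to strict convexity) is exactly what makes this possible and is unavailable in the Riemannian setting. One then checks that strict fibrewise convexity is preserved, that the deformation is $C^\infty$-small when $\psi_j$ is, and that it is supported in the interior so $\tilde\varphi=\varphi$ near $\partial D$; if $\varphi$ is reversible and $\tilde\sigma$ symmetric, the construction is carried out symmetrically in $\pm$, producing a reversible $\tilde\varphi$.

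\textbf{Step 4 (assembly and main obstacle).} Composing the $\psi_j$ with the corresponding local deformations yields $\tilde\Sigma$; since every step keeps the flow close to the original, the total flow still satisfies (S1)--(S3), so $\tilde\Sigma=\Sigma_{\tilde\varphi}$ for a simple metric $\tilde\varphi$ whose dual lens map is $\tilde\sigma$, which equals $\varphi$ near $\partial D$, and with $\tilde\varphi\to\varphi$ in $C^\infty$ as $\tilde\sigma\to\sigma$. I expect the decisive difficulty to be Step 3: turning a prescribed germ of a symplectic map into an honest, smooth, fibrewise strictly convex modification of the indicatrix field while keeping everything small and, when required, reversible. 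By comparison the decomposition in Step 2 and the openness of (S1)--(S3) in Step 1 are soft.
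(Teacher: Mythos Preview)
The paper does not prove this statement: Theorem~\ref{thm2} is quoted verbatim from Burago--Ivanov \cite{BI} and used as a black box in Section~8, with no argument supplied here. So there is no ``paper's own proof'' to compare your proposal against.

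That said, your outline is a reasonable high-level summary of the strategy in \cite{BI}: encode the metric by its unit cosphere bundle, localise the perturbation of the dual lens map in flow-boxes, and exploit the flexibility of Finsler indicatrices to realise each local symplectic tweak by a fibrewise convex deformation. You have correctly identified Step~3 as the substantive part and Steps~1, 2, 4 as soft. If you actually need the content of the proof (rather than just the statement), you should consult \cite{BI} directly, since the present paper neither reproduces nor sketches it.
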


\section{Perturbation of flat metric}

Let $\varphi_0$ be the Euclidean metric on $\mathbb{T}^{3}$. We regard $\mathbb{T}^3$ as the cube $[-1,1]^3$ with sides identified. Let $T_0:=[-1,1]^2\times\{-1\}$ be the 2-torus on $\mathbb{T}^{3}$ given by the ``bottom face'' of $\mathbb{T}^{3}$, and we use $x,y,\alpha,\beta$ to denote the coordinates in its cotangent bundle. Let $z$ be the vertical coordinate of $\mathbb{T}^{3}$ and $\gamma$  the corresponding coordinate in the cotangent space. 

Define $\Gamma_0:=\{(p,\chi)\in UT^*\mathbb{T}^{3}: p\in T_0, \chi=(\alpha,\beta,\gamma)\in UT^*_p \mathbb{T}^3, \gamma>0 \}$, where $UT^*\mathbb{T}^{3}$ denotes the unit cotangent bundle of $\varphi_0$. $\Gamma_0$ inherits a natural symplectic form from $T^*\mathbb{T}^3$. We set $R_0:\Gamma_0\rightarrow\Gamma_0$ to be the first return map to $\Gamma_0$ of the geodesic flow on $(\mathbb{T}^{3}, \varphi_0)$. Observe that $R_0$ is a symplectomorphism.

\begin{lemma}\label{lem4}
$R_0$ is symplectomorphic to the time-one map of a Hamiltonian flow $\Phi_{\tilde{H}_0}^t$ on $B^*T_0=\{(p,\chi)\in T^*T_0:  \varphi_0(p,\chi)<1\}$. Here the symplectic form on $B^*T_0$ is the restriction of the natural symplectic form on $T^*T_0$.
\end{lemma}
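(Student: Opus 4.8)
The plan is to write down the first return map $R_0$ explicitly in coordinates and recognize it as the flow map of $\tilde H_0$. Geodesics of the flat metric $\varphi_0$ on $\mathbb{T}^3$ are straight lines traversed at unit speed, so a geodesic starting at $(x,y,-1)\in T_0$ with unit dual covector $(\alpha,\beta,\gamma)$, $\gamma>0$, moves in the direction $(\alpha,\beta,\gamma)$ (here the metric is Euclidean, so the Legendre transform is the identity and the unit cotangent bundle is the unit sphere bundle $\alpha^2+\beta^2+\gamma^2=1$). The first return to $T_0$ happens when the $z$-coordinate increases by $2$ (the height of the fundamental cube), which takes time $t = 2/\gamma$. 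During this time the horizontal position changes by $(2\alpha/\gamma, 2\beta/\gamma)$, while $(\alpha,\beta,\gamma)$ is preserved and $\gamma$ stays positive. Hence in coordinates
$$
R_0(x,y,\alpha,\beta) = \left( x + \frac{2\alpha}{\gamma},\, y + \frac{2\beta}{\gamma},\, \alpha,\, \beta \right), \qquad \gamma = \sqrt{1-\alpha^2-\beta^2}.
$$
Here I have used $(x,y,\alpha,\beta)$ as coordinates on $\Gamma_0$: since $\gamma>0$ is determined by $\alpha^2+\beta^2<1$, the map $\Gamma_0\to B^*T_0$, $(p,\chi)\mapsto (x,y,\alpha,\beta)$, is a diffeomorphism, and one checks it pulls back the natural symplectic form on $T^*T_0$ to the one $\Gamma_0$ inherits from $T^*\mathbb{T}^3$ — this is the standard fact that the characteristic reduction of a hypersurface section of a cotangent bundle carries the reduced symplectic form, and here the reduction along the geodesic flow identifies $\Gamma_0$ with the space of horizontal data.

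Next I compare with the formula for $\Phi_{\tilde H_0}^t$ recorded in Section 6. With $n=2$ and $H_0(x,y,\alpha,\beta)=\tfrac12(\alpha^2+\beta^2)$, $\tilde H_0 = -\sqrt{1-\alpha^2-\beta^2}$, the excerpt gives
$$
\Phi_{\tilde H_0}^t(x,y,\alpha,\beta) = \left( x + \frac{t\alpha}{\sqrt{1-\alpha^2-\beta^2}},\, y + \frac{t\beta}{\sqrt{1-\alpha^2-\beta^2}},\, \alpha,\, \beta \right).
$$
Setting $t=2$ (and absorbing the factor $2$ into the choice of time normalization, or equivalently rescaling $\tilde H_0$ by the constant $2$, which does not affect the phase portrait) gives exactly $R_0$ under the coordinate identification above. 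Thus $R_0$ is symplectomorphic to $\Phi_{\tilde H_0}^1$ on $B^*T_0$, as claimed.

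The only genuine content, and the step I expect to need the most care, is verifying that the coordinate chart $\Gamma_0\cong B^*T_0$ is a \emph{symplectomorphism} rather than merely a diffeomorphism intertwining the two maps. I would handle this by viewing $\Gamma_0$ as a cross-section of the geodesic flow on the energy level $\{(\varphi_0^*)^2/2 = 1/2\}\subset T^*\mathbb{T}^3$ transverse to the flow, and invoking the standard reduction: such a cross-section inherits a symplectic form equal to the restriction of the Liouville form's differential, and the first-return map is automatically symplectic. Then one identifies the reduced coordinates: the $\mathbb{T}^3$-translation in the $z$-direction lets one eliminate $(z,\gamma)$ using $z=-1$ and $\gamma = \sqrt{1-\alpha^2-\beta^2}$, leaving $(x,y,\alpha,\beta)$ with the form $d\alpha\wedge dx + d\beta\wedge dy$, which is precisely the canonical form on $B^*T_0$. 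Everything else is the elementary computation above; no estimates are involved.
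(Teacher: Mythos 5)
Your proof is correct and follows essentially the same route as the paper's: identify $\Gamma_0$ with $B^*T_0$ via the projection that drops $(z,\gamma)$, check this is symplectic (on $\Gamma_0$ we have $dz=0$, so the restricted form is $dx\wedge d\alpha+dy\wedge d\beta$), write the lifted return map explicitly, and recognize it as a time-one map of the flow of $\tilde H_0$. You are in fact a bit more careful than the paper, whose stated lift has horizontal displacement $\alpha/\sqrt{1-\alpha^2-\beta^2}$ rather than the factor-$2$ version appropriate for the cube $[-1,1]^3$; your absorption of that constant into the normalization of $\tilde H_0$ (or of time) handles this harmlessly.
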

\begin{proof}
For the covectors in $\Gamma_0$, we have $z=0$ and $\gamma=\sqrt{1-\alpha^2-\beta^2}$ since $v$ is a unit covector. $\Gamma_0$ is bijective to $B^*T_0$ via the canonical projection $\Pi: \Gamma_0 \rightarrow B^*T_0$:
$$\Pi(x,y,-1,\alpha,\beta,\gamma)=(x,y,\alpha,\beta).$$

Observe that $\Pi$ is a symplectic bijection between $\Gamma_0$ and $B^*T_0$. Let 
$$R_1:=\Pi \circ F_0\circ \Pi^{-1}: B^*T_0\rightarrow B^*T_0.$$
By a simple calculation we know that the map $R:B^*\mathbb{R}^3\rightarrow B^*\mathbb{R}^3$ defined by
$$R(x,y,\alpha,\beta):=\left(x+\frac{\alpha}{\sqrt{1-\alpha^2-\beta^2}}, y+\frac{\beta}{\sqrt{1-\alpha^2-\beta^2}}, \alpha,\beta\right)$$
is a lift of $R_1$ to the universal cover. Define a function $\tilde{H}_0$ on $B^*T_0$ by
$$\tilde{H}_0=-\sqrt{1-\alpha^2-\beta^2}.$$
It is not hard to see that $R_1=\Phi_{\tilde{H}_0}^1$. 
\end{proof}

By Lemma \ref{lem4}, in order to perturb $R_0$ to get positive metric entropy, we need only to perturb $R_1$. Note that $\Phi_{\tilde{H}_0}^t$ and $\Phi^t_{H_0}$ are the same up to time reparametrization. Let $H_{\epsilon}$ be the perturbation of $H_0$ as in Lemma \ref{lem3}, and define
$$\tilde{H}_{\epsilon}:=-\sqrt{1-2H_{\epsilon}}.$$ 
$\Phi_{\tilde{H}_{\epsilon}}^t$ has the same trajectories as $\Phi_{H_{\epsilon}}^t$, hence $\Phi_{\tilde{H}_{\epsilon}}^t$ has positive metric entropy since $\Phi_{H_{\epsilon}}^t$ does. Since the support of perturbation is contained in $\{\alpha^2+\beta^2<2/3\}$, $\tilde{H}_{\epsilon}\rightarrow\tilde{H}$ in $C^{\infty}$. From Lemma \ref{lem5} we know that  $\Phi_{\tilde{H}_{\epsilon}}^1\rightarrow \Phi_{\tilde{H}}^1=R_1$ in $C^{\infty}$.

\begin{proof}[Proof of Main Theorem]
We want to build a reversible Finsler metric $\varphi_{\epsilon}$ on $\mathbb{T}^3$ such that the first return map to $\Gamma_0$ is $R_1^{\epsilon}:=\Phi_{\tilde{H}_{\epsilon}}^1$. Let $D^{3}$ be the $3$-dimensional ball inscribed in the $3$-dimensional cube $[-1,1]^{3}$ and $\sigma_0: U^*_{in}\rightarrow U^*_{out}$ be the dual lens map of the Euclidean disc $(D^3, \varphi_0)$. Denote with $\Gamma_{\pm}:=\{(x,y,z,\alpha,\beta,\gamma)\in UT^*(\mathbb{R}^{3}): z=\pm 1, \gamma>0\}$ and define the projections $\Pi_{\pm}: \Gamma_{\pm}\rightarrow B^*\mathbb{R}^{2}$ by
$$\Pi_{\pm}(x,y,\pm 1,\alpha,\beta,\gamma)=(x, y, \alpha,\beta).$$
It is clear that both $\Pi_{\pm}$ are symplectic bijections.

Let $K$ be the support of $R_1^{\epsilon}- R_1$. We define a function $\phi_1: \Pi_{-}^{-1}(K)\rightarrow U^*_{in}$ as follows: for $\zeta\in \Pi_{-}^{-1}(K)$,  consider its orbit under the Euclidean geodesic flow. By choosing large $a$ in the construction of $H_{\epsilon}$ if necessary, this orbit will transverse $U^*_{in}$, and we choose $\phi_1(\zeta)$ to be the first intersection, see Figure 4. Analogously we can define the map $\varphi_2: \Pi^{-1}_{+}(R(K))\rightarrow U^*_{out}$ by associating any $\eta\in \Pi^{-1}_{+}(R(K))$ with the first intersection of its backward orbit with $U^*_{out}$. It is clear that both $\phi_1$ and $\phi_2$ are symplectic injections.

\begin{figure}
\begin{tikzpicture}[line cap=round,line join=round,x=1.0cm,y=1.0cm]
\clip(-0.3,-0.3) rectangle (6.3,4.3);
\draw (0.,3.)-- (0.,0.);
\draw (0.,0.)-- (3.,0.);
\draw (3.,0.)-- (3.,3.);
\draw (3.,3.)-- (0.,3.);
\draw (3.,0.)-- (5.,1.);
\draw (5.,1.)-- (5.,4.);
\draw (5.,4.)-- (3.,3.);
\draw (5.,4.)-- (2.,4.);
\draw (2.,4.)-- (0.,3.);
\draw [dash pattern=on 1pt off 1pt] (0.,0.)-- (2.,1.);
\draw [dash pattern=on 1pt off 1pt] (2.,4.)-- (2.,1.);
\draw [dash pattern=on 1pt off 1pt] (2.,1.)-- (5.,1.);
\draw (3.64,1.1) node[anchor=north west] {$T_0$};
\draw (5.16,2.78) node[anchor=north west] {$\mathbb{T}^{3}$};
\draw(2.5,2.) circle (1.5cm);
\draw [shift={(2.5,3.5)}] plot[domain=3.9269908169872414:5.497787143782138,variable=\t]({1.*2.121320343559643*cos(\t r)+0.*2.121320343559643*sin(\t r)},{0.*2.121320343559643*cos(\t r)+1.*2.121320343559643*sin(\t r)});
\draw [shift={(2.5,0.5)},dash pattern=on 1pt off 1pt]  plot[domain=0.7853981633974483:2.356194490192345,variable=\t]({1.*2.121320343559643*cos(\t r)+0.*2.121320343559643*sin(\t r)},{0.*2.121320343559643*cos(\t r)+1.*2.121320343559643*sin(\t r)});
\draw [->] (1.06,0.26) -- (1.26,0.9);
\draw [->] (1.386334519572954,1.304270462633452) -- (1.586334519572954,1.944270462633452);
\draw [dash pattern=on 1pt off 1pt] (1.2411387900355872,0.8396441281138789)-- (1.386334519572954,1.304270462633452);
\begin{scriptsize}
\draw [fill=black] (1.06,0.26) circle (1.5pt);
\draw[color=black] (1.,0.92) node {$\zeta$};
\draw [fill=black] (1.386334519572954,1.304270462633452) circle (1.5pt);
\draw[color=black] (1.92,2.08) node {$\phi_1(\zeta)$};
\end{scriptsize}
\end{tikzpicture}
\caption{}
\end{figure}

The restriction of $R$ on $K$ can be decomposed as
$$R|_{K}=\Pi_{+}\circ \phi_2^{-1} \circ \sigma_0 \circ \phi_1\circ\Pi_{-}^{-1}.$$

Let $R^{\epsilon}$ be a lift of $R_1^{\epsilon}$ to the universal cover. Define a dual lens map $\sigma_{\epsilon}: U^*_{in}\rightarrow U^*_{out}$ by 
$$\sigma_{\epsilon}(\chi)=\left\{
\begin{aligned}
&\phi_2\circ \Pi_{+}^{-1}\circ R^{\epsilon}\circ \Pi_{-}\circ \phi_1^{-1}(\chi),& &\text{ if } \chi\in\phi_1(\Pi_{-}^{-1}(K)); \\
&-\phi_1\circ \Pi_{-}^{-1}\circ (R^{\epsilon})^{-1} \circ \Pi_{+}\circ \phi_2^{-1}(-\chi), & &  \text{ if } \chi\in -\phi_2(\Pi_{+}^{-1}(R(K))); \\
&\sigma(\chi), & &\text{ otherwise.}
\end{aligned}
\right.$$
It is clear that $\sigma_{\epsilon}$ is symmetric and coincides with $\sigma$ outside a compact set. Moreover $\sigma_{\epsilon}\rightarrow \sigma$ in $C^{\infty}$ as $R^{\epsilon}\rightarrow R$ in $C^{\infty}$.

The map $\sigma_{\epsilon}$ is a symplectic perturbation of $\sigma_0$ and $\sigma_{\epsilon}=\sigma_0$ outside a compact set in $U^*_{in}$. By Theorem \ref{thm2}, there exists a reversible Finsler metric $\varphi_{\epsilon}$ in $D^{3}$ that agrees with $\varphi_0$ in a neighborhood of the boundary $\partial D^{3}$ and such that the dual lens map for $(D^{3}, \varphi_{\epsilon})$ is $\sigma_{\epsilon}$. Now extend $\varphi_{\epsilon}$ to the whole $T\mathbb{T}^{3}$ by setting it equal to $\varphi_0$ outside $D^3$. It has positive metric entropy since the return map does. As $\epsilon \rightarrow 0$, we have $\varphi_{\epsilon} \rightarrow \varphi_0$ in $C^{\infty}$.
\end{proof}
\begin{remark}\label{rem4}
The example we construct in the main theorem does not have Arnold diffusion. In fact, since $\varphi$ is close to the flat metric, we have only to prove that the return map on $\Gamma_0$ cannot have large range in action variables $\alpha, \beta$. This is clear, since for $\epsilon>0$ small enough and for $\alpha^2+\beta^2\geq 1$ the return map coincides with $\tilde{F}$ hence the ranges of action variables are uniformly bounded.

\end{remark}

\section{Acknowledgments}
The author thanks Dmitri Burago for numerous helpful conversations. In particular he suggested this topic and offered help on this problem. The author is grateful to  Moisey Guysinsky, Sergei Ivanov, Anatole Katok, Mark Levi, Federico Rodriguez Hertz and Yakov Sinai for useful discussions. The author thanks the anonymous referee for the helpful comments in revising this paper.

\end{document}